\numberwithin{equation}{section}
\newtheorem{teo}{Theorem}[section]
\newtheorem*{teo*}{Theorem}
\newtheorem*{prop*}{Proposition}
\newtheorem*{corol*}{Corollary}
\newtheorem{prop}[teo]{Proposition}
\newtheorem{corol}[teo]{Corollary}
\newtheorem{lema}[teo]{Lemma}
\newtheorem{defi}[teo]{Definition}
\newtheorem{hyp}{Hypothesis}
\theoremstyle{definition}
\newtheorem{ex}[teo]{Example}
\newcommand{\Dom}{\operatorname{Dom}}
\newcommand{\D}{\mathbb{D}}
\newcommand{\E}{\mathbb{E}}
\newcommand{\F}{\mathcal{F}}
\renewcommand{\H}{\mathcal{H}}
\newcommand{\N}{\mathbb{N}}
\renewcommand{\P}{\mathbb{P}}
\newcommand{\R}{\mathbb{R}}
\newcommand{\1}{\mathbf{1}}
\newcommand{\phy}{\varphi}
\newcommand{\eps}{\varepsilon}
\title{ ON THE SHORT-TIME BEHAVIOUR OF UP-AND-IN BARRIER OPTIONS USING MALLIAVIN CALCULUS}
\author[1]{Òscar Burés}
\affil[1]{ Universitat de Barcelona, Departament de Matemàtica Econòmica, Financera i Actuarial. \authorcr Diagonal 690--696, 08034 Barcelona, Spain.}
\date{\today}
\begin{document}

\maketitle
\begin{abstract}
In this paper we study the short-maturity asymptotics of up-and-in barrier options under a broad class of stochastic volatility models. Our approach uses Malliavin calculus techniques, typically used for linear stochastic partial differential equations, to analyse the law of the supremum of the log-price process. We derive a concentration inequality and explicit bounds on the density of the supremum in terms of the time to maturity. These results yield an upper bound on the asymptotic decay rate of up-and-in barrier option prices as maturity vanishes. We further demonstrate the applicability of our framework to the rough Bergomi model and validate the theoretical results with numerical experiments.
\end{abstract}

\textbf{Keywords:} barrier Options, Malliavin calculus, Stochastic volatility.

\textbf{MSC Classification:} 60G70; 60H07; 60H30; 91G20.

\textbf{Word count:} 5462.

\section{Introduction}

A key problem in Quantitative Finance is the pricing of financial derivatives—contracts between a buyer and a seller whose value depends on an underlying asset. Among all types of derivatives, particular attention from both academics and practitioners has been drawn to options. These contracts give the holder the right, but not the obligation, to exercise the contract if market conditions at maturity are favourable.

There are several types of options, with European call and put options being among the most prominent. For these instruments, the price is typically computed as the expected value, under a risk-neutral probability measure, of the positive difference between the asset’s value at maturity and the strike price. Since these options depend only on the terminal value of the underlying asset, they allow for analytical pricing under simple modeling assumptions, such as the Black-Scholes framework.

However, real-world financial markets offer a much broader variety of options that are more complex to analyze. Among these are path-dependent options, whose payoff depends on the entire trajectory of the underlying asset up to maturity. Notable examples include Asian options, lookback options, Bermuda options, and barrier options—the latter being the focus of this article.

Barrier options are similar to European options in that their payoff structure is identical, but with the crucial distinction that the payoff is contingent upon the underlying asset reaching a predefined level, known as the barrier. Among barrier options, we distinguish between "In" options, which are activated only if the asset price hits the barrier during the contract's life, and "Out" options, which are deactivated if the barrier is breached. In this article, we focus specifically on the asymptotic behaviour of up-and-in barrier call options, which are call options that can only be exercised if the asset price reaches a barrier level that lies above its initial value. We focus on up-and-in barrier options for two main reasons. The first one, since Malliavin calculus allows us to deal with the supremum of a stochastic process, we can apply those techniques to "up" barrier options (which depend on the maximum value of the stock price on a time window). Then, we choose to focus on up-and-in barrier options because, as we will show in the paper, the premium of these options tend to zero as the maturity attains small values regardless of the strike value. The asymptotic behaviour for up-and-out barrier options can be deduced combining the main result of this paper, Theorem \ref{th: main theorem} and the main result from \cite{jafari2025option}.

Closed-form pricing formulas for up-and-in barrier call options exist under the Black-Scholes model. However, empirical evidence shows that the assumption of constant volatility is often insufficient to capture certain behaviors of the underlying asset, such as volatility clustering, leverage effects, or smile/skew phenomena as it is shown in e.g. \cite{hullwhite}, \cite{wiggins87}, \cite{steinstein}, and \cite{heston93}. Moreover, the work of authors in \cite{CR}, \cite{ALV}, and \cite{F},  \cite{gatheralJR18} and \cite{bayerfrizgatheral16} show that the asset price dynamics is more compatible with rough volatility models. When stochastic volatility is introduced, the analytical treatment of barrier options becomes considerably more challenging. Nevertheless, it is still possible to study the properties of such options using tools such as stochastic calculus, Malliavin calculus, and computational methods like Monte Carlo simulations. The research on barrier option focuses on their pricing and hedging (see, for instance \cite{zvan2000pde}, \cite{brown2001robust} or \cite{kou2003pricing}). Previous work on the asymptotic behavior of barrier options can be found in the literature. In \cite{CarradaHerrera2013} the authors develop an expansion for double barrier options with constant volatility and discontinuities coming from a compound Poisson process. Regarding the analysis of the asymptotic behaviour under stochastic volatility, the work of the authors in \cite{Hu16062010} and \cite{kato2013asymptotic} provides asymptotic expansions of barrier options under specific stochastic volatility models such as the CEV or SABR volatility models.

The difficulty of analysing the supremum of a stochastic process that is not Gaussian makes it difficult to extend the papers previously cited to a general stochastic volatility framework. Using Malliavin calculus we are able to show that the probability of attaining the Barrier (and therefore, the price of an up-and-in Barrier call option) decays faster than any polynomial of the maturity.

In \cite{jafari2025option}, it is shown that for European options in the out-of-the-money (OTM) and at-the-money (ATM) cases, the option price tends to zero as maturity $T \to 0$, whereas in the in-the-money (ITM) case, the price tends to the difference between the initial asset price and the strike. Moreover, it is proven that this convergence rate is, at most, polynomial in $T$. Since the exercise condition of an up-and-in barrier call option is more restrictive than that of a standard European call option, it follows that its price is always lower. Consequently, in the OTM and ATM cases, the price of an up-and-in barrier call option also vanishes as 
$T \to 0$. In this paper, we address the problem of determining the limit value of an ITM up-and-in barrier call option as maturity vanishes, and we further show that the rate at which this limit is reached is faster than any polynomial in $T$, thus reaching its limit value in a faster rate as the observed for European options.

To study the asymptotic behaviour of up-and-in barrier call options, we employ Malliavin calculus to analyse the probability that the underlying asset hits the barrier. Specifically, we derive two types of bounds for this probability. The first is a direct bound on the probability that the supremum of the underlying process exceeds the barrier, using a concentration inequality. The second involves estimating the CDF of the supremum by first obtaining estimates of its density function. In both cases, the approach is far from straightforward. Indeed, the supremum of the underlying process does not satisfy the regularity conditions typically required to apply Malliavin-based concentration inequalities or density criteria.

To overcome this difficulty, we rely on the work of \cite{nualart1988continuite}, which provides a framework for computing the Malliavin derivative of the supremum. This allows us to apply a suitable concentration inequality to our setting. Furthermore, we draw upon techniques similar to those in \cite{florit1995local} to prove the existence of a density for the supremum and derive estimates for it. The problem of analysing the density of the supremum of a process has been previously studied in the context of solutions to stochastic differential equations—for instance, in \cite{pu2018stochastic} or \cite{dalang2020density}, where the authors derive density estimates for the supremum of the solution to the stochastic heat equation with additive noise. In our case, the noise is not additive, as we assume the presence of stochastic volatility. However, we manage to treat the problem in a way that is effectively equivalent to having additive noise, provided that the volatility process is not completely correlated with the driving noise of the stock price.

This paper is organized as follows. Section \ref{sec 2} introduces the model assumed for the underlying asset. Section \ref{sec 3} presents the Malliavin calculus tools used to study the density of the asset price supremum over $[0,T]$. In Section \ref{sec 4}, we state the main result: up-and-in barrier call option prices vanish as $T \to 0$, with a convergence rate faster than any polynomial. Section \ref{sec 5} proves this limit by introducing an auxiliary random variable controlling the supremum, inspired by techniques from \cite{florit1995local}. Section \ref{sec 6} provides a first estimate for the barrier-hitting probability using a concentration inequality, for which we verify the required Malliavin differentiability conditions as in \cite{nourdin2009density} using techniques similar to the ones in \cite{nualart1988continuite}. In Section \ref{sec 7}, we prove the existence of a density for the supremum using a local criterion, derive estimates for it, and obtain a bound for its CDF—confirming the faster-than-polynomial decay. Finally, Section \ref{sec 8} applies our results to the Rough Bergomi model, illustrating the decay rate both theoretically and numerically.

\section{The Model} \label{sec 2}
 We assume the following model for the log-price process, under a risk-neutral measure given by the market and under the assumption of null interest rate $r = 0$:  
\begin{eqnarray}\label{model}
X_t=x-\frac{1}{2}\int^t_0\sigma^2_s ds+\int^t_0\sigma_s(\rho dW_s+\sqrt{1-\rho^2}dB_s),\,\, t\in [0,T],
\end{eqnarray}
where $X_0$ is the current log price, $W$ and $B$ are independent standard Brownian motions and $\rho \in (-1,1)$. The volatility process $\sigma$ is assumed to be a square-integrable process, adapted to the filtration generated by $W$. Throughout the paper we will use the notation $Z := \rho W + \sqrt{1-\rho^2}B$, so Model \eqref{model} can also be written as
\[
X_t=x-\frac{1}{2}\int^t_0\sigma^2_s ds+\int^t_0\sigma_sdZ_s,\,\, t\in [0,T].
\]
We denote by ${\mathbb{F}}^W = \{\F_t^W; t \in [0,T]\}$ and ${\mathbb{F}}^B = \{\F^B_t; t \in [0,T]\}$ the natural filtrations generated by the independent processes $W$, $B$. Moreover, we denote by $\mathbb{F} = \{\F_t; t \in [0,T]\}$ the filtration generated by both $W$ and $B$, that is, $\F_t = \F^W_t \vee \F^B_t$ for all $t \in [0,T]$. It is well known that the pricing formula for a European call option with a strike price $K$ under Model \eqref{model} is given by  
\begin{equation*}
C_{t}=E\left[(e^{X_{T}}-K)_{+}| {\cal F}_{t}\right].
\end{equation*} 
Given a real number $B > S_0$, an up-and-in barrier call option with underlying asset $S$, strike $K$, maturity $T$ and barrier $B$ is a financial derivative with payoff $(S_T - K)_+ \1_{\{\sup_{t \in [0,T]} S_t \geq B\}}$. In other words, the price of an up-and-in barrier call option under this configuration at time $t =0$ is given by
\[
C_0^{b} = \E\left[(S_T - K)_+ \1_{\{\sup_{t \in [0,T]} S_t \geq B\}} \right].
\]
We are interested in the short-time behaviour of $C_0^b$, so we can assume without loss of generality that $T \leq 1$. Since $S_t = \exp(X_t)$ and the exponential function is increasing, we can rewrite the premium of the barrier option in terms of the log-price. I.e. the price of such barrier option at time $t = 0$ can be written as
\[
C_0^{b} = \E\left[(e^{X_T} - K)_+ \1_{\{M_T\geq b\}} \right], \quad M_T = \sup_{t \in [0,T]} X_t, \quad b = \log B.
\]
From the fact that $ 0 \leq \1_{\{M_T \geq b\}} \leq 1$ it is clear that $C^{b}_0 \leq C_0$. This also can be interpreted in a financial sense: since the conditions for an up-and-in barrier call option to be exercised are more restrictive than the conditions for an European call option, one potentially gets more benefit from the European call option than from the barrier option, so the European call option must be more expensive. Notice that a simple application of Hölder's inequality shows that
\[
C_0^b \leq ||(S_T -K)_+||_{L^p(\Omega)} \P\left(M_T \geq b \right)^{1/q}, \quad \frac{1}{p} + \frac{1}{q} = 1.
\]
So, if we manage to prove that $\lim_{T \to 0} C_0^b = 0$, then, an analysis of the speed of convergence of the term $\P\left(M_T \geq b \right)$ to zero will give us an upper bound for speed of convergence of $C_0^b$ to zero. Hence, the objectives of this paper are the following:
\begin{itemize}
    \item Show that $\lim_{T \to 0} C_0^b = 0$.
    \item Using a concentration inequality we will show an a-priori upper bound for $\P(M_T \geq b)$.
    \item Using more involved Malliavin calculus techniques, we will give an upper bound for the density function of $M_T$ in order to estimate the cummulative distribution function of $M_T$ and derive and alternative upper bound for $\P(M_T \geq b)$.
\end{itemize}
\section{Malliavin Calculus Tools} \label{sec 3}
In this section we will introduce all the notions from Malliavin Calculus needed in order to cover the problem of estimating the law of the supremum of a stochastic process of the form \eqref{model}. Let $\H = L^2([0,T])$ and let $B := \{B(h); h \in \H\}$ the associated isonormal Gaussian process. We denote by $\mathcal{S}$ the class of smooth random variables of the form
\[
F = f(B(h_1), \dots, B(h_n))
\]
where $h_i \in \H$ for all $i \in \{1, \dots, n\}$ and $f \in \mathcal{C}^{\infty}_b(\R^n)$. Given a random variable $F \in \mathcal{S}$, we define the Malliavin derivative of $F$ as the $\H$-valued stochastic process $DF = \{D_t F; t \in [0,T]\}$ where
\[
D_tF = \sum_{i=1}^n \partial_i f(B(h_1), \dots, B(h_n)) h_i(t).
\]
We can also define higher order derivatives in a similar manner. Indeed, for any natural number $k \geq 1$ we define the $k$-th order Malliavin derivative of $F$ as the $\H^{\otimes k}$-valued stochastic process $D^k F = \{ D_z F; z \in [0,T]^k\}$ where
\[
D^k_z F = \sum_{i=1}^n \frac{\partial}{\partial z_1} \cdots \frac{\partial}{\partial z_k} f(B(h_1), \dots, B(h_n)) (h_1 \otimes \cdots \otimes h_k)(z), \quad z = (z_1, \dots, z_k).
\]
It is well known that the operators $D^k$ are closable from $L^p(\Omega)$ to $L^p(\Omega; \H^{\otimes k})$ for all $p \geq 1$, $k \geq 1$. This allows to define the spaces $\D^{k,p}$ as the closure of $\mathcal{S}$ with respect to the semi-norm $||\cdot||_{k,p}$ defined by
\[
||F||_{k,p} = \left \{ \E[|F|^p] + \sum_{j=1}^k \E[ ||DF||_{\H^{\otimes j}}^p] \right \}^{1/p}.
\]
We define also $\D^{\infty} := \bigcap_{k \geq 1} \bigcap_{p \geq 1} \D^{k,p}$.

The Malliavin derivative $D$ on the space $L^2(\Omega)$ has an adjoint operator called the divergence operator or Skorohod integral and it is represented by $\delta$. For any process $u \in L^2(\Omega; \H)$, the element $\delta(u)$ is uniquely determined by the duality relationship
\[
\E[ F\delta(u)] = \E\left[\int_0^T D_t F \cdot u_t  dt \right], \quad \text{ for all } F \in \D^{1,2}.
\]
Among all the properties of the Skorohod integral, we will be using the following relationship in Section \ref{sec 7}.
\begin{prop} \label{prop: integral of product}
    Let $F \in \D^{1,2}$ and let $u \in \Dom(\delta)$. Then,
    \[
    \delta(Fu) = F\delta(u) - \langle DF, u\rangle,
    \]
    where $\langle \cdot, \cdot \rangle$ denotes the usual scalar product in $L^2([0,T])$.
\end{prop}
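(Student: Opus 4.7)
The plan is to verify the identity by exploiting the duality relationship that characterizes the Skorohod integral. Concretely, I would show that for every smooth test random variable $G \in \mathcal{S}$, the candidate expression $F\delta(u) - \langle DF, u\rangle$ pairs correctly with $DG$ so as to identify $Fu$ as belonging to $\Dom(\delta)$ with the claimed divergence.

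The key tool is the product rule for the Malliavin derivative: if $G \in \mathcal{S}$ and $F \in \D^{1,2}$, then $GF \in \D^{1,2}$ (since $G$ and all its partials are bounded) and $D(GF) = G\, DF + F\, DG$. Applying the duality relation to the pair $(GF, u)$, which is legitimate because $u \in \Dom(\delta)$, yields
\[
\E[GF\,\delta(u)] \;=\; \E[\langle D(GF), u\rangle] \;=\; \E[G\langle DF, u\rangle] + \E[F\langle DG, u\rangle].
\]
Rearranging gives $\E[G\bigl(F\delta(u) - \langle DF, u\rangle\bigr)] = \E[\langle DG, Fu\rangle]$, which is exactly the duality property characterizing $Fu \in \Dom(\delta)$ with $\delta(Fu) = F\delta(u) - \langle DF, u\rangle$. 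Since $\mathcal{S}$ is dense in $\D^{1,2}$, this is enough to conclude.

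The main technical obstacle is justifying the integrability needed to make each of these expectations finite and the manipulations rigorous: one needs $GF\delta(u), G\langle DF, u\rangle \in L^1(\Omega)$, as well as $Fu \in L^2(\Omega;\H)$ so that the candidate divergence is well-defined in the standard $L^2$ setting. In the minimal hypotheses of the statement this has to be read as an implicit requirement, or obtained by approximation: prove the identity first for $F \in \mathcal{S}$, where everything is bounded and the computation above is immediate, and then extend to $F \in \D^{1,2}$ through a density argument and the closability of $\delta$, invoking dominated convergence. When the proposition is invoked in Section \ref{sec 7}, the specific $F$ and $u$ at hand will allow one to check these integrability requirements directly.
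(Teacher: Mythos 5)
Your argument is the standard textbook proof of this identity (it is Proposition 1.3.3 in \cite{nualart2006malliavin}, which the paper states without proof as a known result), and it is correct: duality applied to $GF$, the product rule $D(GF)=G\,DF+F\,DG$, and the rearrangement $\E[\langle DG, Fu\rangle]=\E[G(F\delta(u)-\langle DF,u\rangle)]$ for $G$ in a dense class is exactly how one identifies $Fu\in\Dom(\delta)$ with the claimed divergence. You are also right to flag that the statement as written suppresses the integrability hypotheses ($Fu\in L^2(\Omega;\H)$ and $F\delta(u)-\langle DF,u\rangle\in L^2(\Omega)$) under which the conclusion holds; these are part of the standard formulation and are indeed satisfied by the specific $F=\gamma_A^{-1}$ and $u=u_A$ used in Section \ref{sec 7}.
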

One of the main applications of the Malliavin calculus to probability theory is the existence of criteria for checking the absolute continuity of the law of random variables and deciding whether the density function of $F$ is smooth. The most classical criterion in order to prove that the density of a random vector is smooth is the following.
\begin{teo}
    Let $F = (F_1, \dots, F_d)$ be a random vector whose components are in $\D^{\infty}$. Assume that the Malliavin matrix $\gamma_{i,j} := \langle DF^i, DF^j\rangle$ satisfies $|\det \gamma|^{-1} \in L^p(\Omega)$ for all $p  \geq 1$. Then, the random vector $F$ is absolutely continuous with respect to the Lebesgue measure and $F$ possesses an infinitely differentiable density.
\end{teo}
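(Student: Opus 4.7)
My plan is the classical Malliavin integration-by-parts route, closed by a Fourier inversion argument. First I would establish a one-step integration-by-parts formula. For any $\varphi\in C_b^\infty(\R^d)$ the chain rule for the Malliavin derivative gives $D(\varphi(F)) = \sum_{i=1}^d (\partial_i\varphi)(F)\,DF^i$ in $\H$. Taking the scalar product with $DF^j$ yields $\langle D(\varphi(F)),DF^j\rangle_{\H} = \sum_{i=1}^d (\partial_i\varphi)(F)\,\gamma_{ij}$. The assumption $|\det\gamma|^{-1}\in L^p(\Omega)$ for all $p\ge 1$ makes $\gamma$ invertible a.s., and via Cramer's rule places every entry of $\gamma^{-1}$ in all $L^p$. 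Inverting and using the duality between $D$ and $\delta$ then gives
\[
\E\bigl[(\partial_i\varphi)(F)\bigr] = \sum_{j=1}^d \E\bigl[(\gamma^{-1})_{ij}\,\langle D(\varphi(F)),DF^j\rangle_{\H}\bigr] = \sum_{j=1}^d \E\bigl[\varphi(F)\,\delta\bigl((\gamma^{-1})_{ij}\,DF^j\bigr)\bigr].
\]

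Next I would promote the $(\gamma^{-1})_{ij}$ to $\D^\infty$. Since $F\in(\D^\infty)^d$, every $\gamma_{ij}\in\D^\infty$; differentiating the identity $\gamma\gamma^{-1}=I$ inductively and using the $L^p$ control on $(\det\gamma)^{-1}$ shows $(\gamma^{-1})_{ij}\in\D^\infty$ as well. Then $(\gamma^{-1})_{ij}DF^j\in\D^{k,p}(\H)\subset\Dom(\delta)$ for every $k,p$, and Meyer's inequalities supply $L^p$-bounds for its Skorohod integral. Iterating the one-step formula, for every multi-index $\alpha\in\N^d$ there exists $H_\alpha\in\bigcap_{p\ge 1}L^p(\Omega)$ such that
\[
\E\bigl[(\partial^\alpha\varphi)(F)\bigr] = \E\bigl[\varphi(F)\,H_\alpha\bigr],\qquad \varphi\in C_b^\infty(\R^d).
\]

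Finally I would transfer this identity to the Fourier side. Applying it to $\varphi(x) = e^{i\xi\cdot x}$ (which is bounded together with all its derivatives) yields $(i\xi)^\alpha\,\widehat{\mu_F}(\xi) = \E[e^{i\xi\cdot F}\,H_\alpha]$, where $\mu_F$ denotes the law of $F$. Hence $|\widehat{\mu_F}(\xi)|$ decays faster than any polynomial in $|\xi|$, and Fourier inversion supplies a $C^\infty$ density for $F$; in particular $F$ is absolutely continuous with respect to Lebesgue measure.

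I expect the main technical obstacle to be the second step: propagating full Malliavin smoothness to $\gamma^{-1}$ with quantitative $\D^{k,p}$-norm control, and then verifying at each iteration that the resulting integrand belongs to $\Dom(\delta^k)$. The heavy lifting is carried out by Meyer's inequalities together with the hypothesis $|\det\gamma|^{-1}\in\bigcap_p L^p(\Omega)$, which is exactly what is needed to keep all derivatives of the entries of $\gamma^{-1}$ in every $L^p$; once this is secured, the passage from the iterated integration-by-parts formula to smoothness of the density is standard.
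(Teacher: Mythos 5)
Your argument is correct and is essentially the standard proof of this classical criterion; the paper itself does not prove the statement but simply cites Nualart (2006), where exactly this iterated integration-by-parts scheme (with $(\gamma^{-1})_{ij}\in\D^{\infty}$ obtained from Cramer's rule and the moment bound on $(\det\gamma)^{-1}$, followed by rapid Fourier decay and inversion) is carried out. No gaps to report.
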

\begin{proof}
    See \cite{nualart2006malliavin}.
\end{proof}
This result is widely applied in the study of the density of solutions to SDEs and SPDEs since, under the assumption that the coefficients of the differential equations are infinitely many times differentiable, one can prove in most cases that the solution belongs to $\D^{\infty}$ and under ellipticity conditions on the diffusion coefficient one can check that the determinant of the inverse of the Malliavin matrix has moments of all orders, concluding that the density of the solution is smooth.

In this paper, we study the supremum of the stock price process $S$ over the time interval $[0, T]$. It is well known that the functional
$$
M_T := \sup_{t \in [0,T]} X_t
$$
belongs to $\mathbb{D}^{1,2}$, but does not exhibit higher regularity. Although $M_T$ lacks the smoothness required to directly apply the previously stated criterion, a localized version of the result is available and well-suited for this setting.
\begin{teo}
    Let $F = (F^1, \dots, F^d)$ be a random vector whose components are in $\D^{1,2}$. Let $A$ be an open set of $\R^d$. Assume that there exist $\H$-valued random variables $u_A$ and a $d \times d$ random matrix $\gamma_A = (\gamma_{i,j})_{1\leq i,j \leq d}$ such that
    \begin{itemize}
        \item [1.] $u^j_A \in \D^{\infty}(\H)$ for all $j \in \{1, \dots, d\}$.
        \item [2.] $\gamma_{i,j} \in \D^{\infty}$ for all $(i,j) \in \{1, \dots, d\}^2$ and $|\det \gamma_A|^{-1} \in L^p(\Omega)$ for all $p \geq 1$.
        \item [3.] $\langle DF^i, u_A^j\rangle  = \gamma_A^{i,j}$ on the set $\{F \in A\}$ for all $(i,j) \in \{1, \dots, d\}^2$.
    \end{itemize}
    Then, the random vector $F$ possesses an infinitely differentiable density on the open set $A$.
\end{teo}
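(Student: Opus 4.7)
The plan is to run the standard Malliavin integration-by-parts machinery in a localized form. I aim to prove by induction on $|\alpha|$ that for every multi-index $\alpha$, every test function $\phi \in \mathcal{C}_c^{\infty}(A)$, and every $G \in \D^{\infty}$, there exists $H_{\alpha, G} \in \D^{\infty}$ with
\[
\E[(\partial^\alpha \phi)(F)\, G] = \E[\phi(F)\, H_{\alpha, G}].
\]
Taking $G = 1$ and letting $\alpha$ be arbitrary, a classical duality argument then yields that the restriction of the law of $F$ to $A$ has an infinitely differentiable density.

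For the base step $|\alpha|=1$, fix $\phi \in \mathcal{C}_c^{\infty}(A)$. The chain rule for the Malliavin derivative gives $D(\phi(F)) = \sum_{i=1}^d (\partial_i \phi)(F)\, DF^i$, so that
\[
\langle D(\phi(F)), u^j_A\rangle = \sum_{i=1}^d (\partial_i \phi)(F)\, \langle DF^i, u^j_A\rangle.
\]
By the third hypothesis $\langle DF^i, u^j_A\rangle = \gamma_A^{i,j}$ on $\{F \in A\}$, and since $\partial_i \phi$ vanishes off $A$ both sides of the preceding display agree a.s. By the second hypothesis, $\gamma_A$ is a.s.\ invertible, so setting $\Gamma_A := \gamma_A^{-1}$ I solve the resulting linear system to obtain $(\partial_k \phi)(F) = \sum_{j=1}^d \langle D(\phi(F)), \Gamma_A^{j,k} u^j_A\rangle$. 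Multiplying by $G$ and invoking the duality between $D$ and $\delta$ yields
\[
\E[(\partial_k \phi)(F)\, G] = \sum_{j=1}^d \E[\phi(F)\, \delta(G\, \Gamma_A^{j,k}\, u^j_A)],
\]
so $H_{e_k, G} := \sum_j \delta(G\, \Gamma_A^{j,k}\, u^j_A)$ does the job. The argument is legitimate because $G\, \Gamma_A^{j,k}\, u^j_A \in \D^{\infty}(\H)$: indeed $u^j_A \in \D^{\infty}(\H)$ by the first hypothesis, $G \in \D^{\infty}$ by assumption, and $\Gamma_A^{j,k} \in \D^{\infty}$ by Cramer's rule combined with the chain rule, using that each entry of $\gamma_A$ lies in $\D^{\infty}$ and $|\det \gamma_A|^{-1} \in L^p$ for all $p$. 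Meyer's inequalities then give $H_{e_k, G} \in \D^{\infty}$ as well.

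For the inductive step, given the formula for multi-indices of length $n$, I write $\partial^{\alpha + e_k}\phi = \partial_k(\partial^\alpha \phi)$ and apply the base step to the pair $(\partial^\alpha \phi, G)$, reducing the expectation to $\E[(\partial^\alpha \phi)(F)\, H_{e_k, G}]$. The inductive hypothesis applied with $H_{e_k, G} \in \D^{\infty}$ in place of $G$ then completes the recursion, with $H_{\alpha+e_k, G} := H_{\alpha, H_{e_k, G}}$. The main subtlety is that the third hypothesis identifies $\langle DF^i, u^j_A\rangle$ with $\gamma_A^{i,j}$ only on the random set $\{F \in A\}$: this is handled consistently by inserting factors vanishing off $A$ (such as $\phi(F)$ with $\phi \in \mathcal{C}_c^{\infty}(A)$) before taking expectations. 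The other delicate ingredient is the $\D^{\infty}$-regularity of $\Gamma_A$, which rests on the nondegeneracy assumption $|\det \gamma_A|^{-1} \in L^p$ for all $p \geq 1$; once this is secured, the recursion closes automatically via Meyer's inequalities.
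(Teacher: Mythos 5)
Your argument is correct and is essentially the standard localized integration-by-parts proof of this criterion; the paper itself gives no proof, simply citing Florit--Nualart (1995), and your recursion $\E[(\partial^\alpha\phi)(F)G]=\E[\phi(F)H_{\alpha,G}]$ with $H_{e_k,G}=\sum_j\delta(G\,\Gamma_A^{j,k}u_A^j)$ is exactly the mechanism behind that reference. The two points needing care --- that $(\partial_i\phi)(F)$ vanishes off $\{F\in A\}$ so the identification with $\gamma_A$ can be used, and that $(\det\gamma_A)^{-1}\in\D^{\infty}$ follows from $\det\gamma_A\in\D^{\infty}$ together with $|\det\gamma_A|^{-1}\in\bigcap_p L^p(\Omega)$ --- are both correctly handled.
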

\begin{proof}
    See \cite{florit1995local}.
\end{proof}
\section{Main Result} \label{sec 4}
In this section we will state the main result of this paper, which describes the short-time behaviour of the price of an up-and-in barrier call option. The following assumptions for the volatility process $\sigma$ are needed.
\begin{hyp} \label{hyp 1}
    There exist $0 < \alpha < \beta$ such that
    \[
    \alpha \leq \sigma_t \leq \beta
    \]
    for almost every $t \in [0,T]$.
\end{hyp}
\begin{hyp} \label{hyp 2}
    For all $p \geq 2$, $\sigma \in \mathbb{L}^{1,p}_W$.
\end{hyp}
From now on, we will assume that the volatility process always satisfies Hypotheses \eqref{hyp 1} and \eqref{hyp 2}. Hence, when we assume that $X$ follows Model \eqref{model} we are also implying the previous hypotheses on the volatility process. 

In the present work, we apply Malliavin calculus techniques in order to understand the asymptotic behaviour of the probability that the barrier is achieved in order to give an upper bound for the asymptotic behaviour of $C_0^b$. The main result is stated as follows.
\begin{teo} \label{th: main theorem}
    Let $X$ be the log-price of an asset $S$ with dynamics given by \eqref{model}. Let $B > S_0$ and let $K$ be the strike price. Then, an up-and-in barrier call option of the form $$C_0^b = \E\left[ (S_T-K)_+ \1_{\{\sup_{t \in [0,T]} S_t \geq B\}}\right]$$ exhibits the following short-time behaviour:
    \begin{itemize}
        \item[(i)] $\lim_{T \to 0} C_0^b = 0$.
        \item[(ii)] There exists $T_0 > 0$ and constants $c_1, c_2, c_3 > 0$ such that the probability of reaching the barrier satisfies
        \[
        \P \left( \sup_{t \in [0,T]} S_t \geq B \right) \leq \min \left\{\exp\left( -\frac{(b-x)^2}{c_1 T} \right),  \int_{b}^{\infty}\frac{c_2}{\sqrt{T}} \exp\left( -\frac{(z-x)^2}{2c_3 T}\right) dz\right \}
        \]
        for $T \leq T_0$.
    \end{itemize}
    In particular, $C_0^b = o(T^{\alpha})$ for every $\alpha> 0$.
\end{teo}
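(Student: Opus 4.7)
I would prove (ii) first and then deduce (i) and (iii) from it via the Hölder bound already written in the paper. Taking $p=q=2$,
\[ C_0^b \le \bigl\|(S_T-K)_+\bigr\|_{L^2(\Omega)}\,\P(M_T\ge b)^{1/2}. \]
Under Hypothesis \ref{hyp 1} the volatility is bounded, so $\E[e^{2X_T}]$ is uniformly bounded for $T\le 1$ and the first factor is controlled by a constant. Once (ii) yields the Gaussian decay $\P(M_T\ge b)\le \exp(-(b-x)^2/(c_1 T))$, it follows that $C_0^b\le C\exp(-(b-x)^2/(2c_1 T))$, which is $o(T^\alpha)$ for every $\alpha>0$ and tends to $0$; this establishes (i) and (iii) simultaneously.

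\textbf{First bound in (ii): concentration inequality.} For the exponential term in the minimum I would apply a Malliavin-based Gaussian concentration inequality to $M_T$, in the spirit of \cite{nourdin2009density}. The two requirements are that $M_T\in\D^{1,2}$ and that $\|DM_T\|_\H$ is essentially bounded by a constant of order $\sqrt{T}$. Both are provided by the representation of the Malliavin derivative of a supremum due to \cite{nualart1988continuite}: if $\tau$ denotes the a.s.\ unique maximiser of $X$ on $[0,T]$, then $D_\cdot M_T$ equals $D_\cdot X_\tau$, and Hypotheses \ref{hyp 1}--\ref{hyp 2} then give $\|DM_T\|_\H^2\le CT$ almost surely. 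The concentration inequality produces $\P(M_T-\E[M_T]\ge r)\le \exp(-r^2/(2CT))$, and standard estimates showing $\E[M_T]-x=O(\sqrt{T})$ ensure that $b-\E[M_T]\ge (b-x)/2$ for $T$ small enough, yielding the first bound after relabelling constants.

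\textbf{Second bound in (ii): density estimate.} For the integral term I would establish a pointwise Gaussian bound on the density of $M_T$,
\[ p_{M_T}(z) \le \frac{c_2}{\sqrt{T}}\exp\!\left(-\frac{(z-x)^2}{2c_3 T}\right), \qquad z>x, \]
and integrate it over $[b,\infty)$. The existence and smoothness of $p_{M_T}$ on the open set $A=(x,\infty)$ would follow from the local Florit--Nualart criterion stated in Section \ref{sec 3}, applied with an auxiliary $\H$-valued process $u_A\in\D^\infty(\H)$ chosen so that $\langle DM_T, u_A\rangle$ coincides on $\{M_T\in A\}$ with a $\D^\infty$ random variable that is bounded below and has inverse moments of every order. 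The explicit bound then comes from an integration-by-parts identity based on Proposition \ref{prop: integral of product}, representing $p_{M_T}(z)$ as $\E[\mathbf{1}_{\{M_T>z\}}\,\delta(H_z)]$ for an appropriate Skorohod integrand $H_z$; exponential-martingale estimates for $H_z$, exploiting the boundedness of $\sigma$ and the non-degeneracy $\sqrt{1-\rho^2}>0$, deliver the Gaussian shape with the correct $T^{-1/2}$ scaling.

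\textbf{Main obstacle.} I expect the density construction to be the hard step. Although $M_T\in\D^{1,2}$, it does not belong to $\D^\infty$, and the maximiser $\tau$ entering $DM_T$ is not itself Malliavin-smooth, which is precisely why only a local criterion can be used. Building the auxiliary process $u_A$ and checking that $\langle DM_T, u_A\rangle$ is genuinely $\D^\infty$ on $\{M_T\in A\}$ is delicate; the assumption $\rho\in(-1,1)$ is decisive here because the component of $Z$ along $B$ is independent of the $\F^W$-adapted process $\sigma$, effectively restoring an ``additive-noise'' structure in the $B$-direction, which is exactly what the density-estimate machinery of \cite{florit1995local} requires.
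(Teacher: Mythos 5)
Your proposal for part (ii) is essentially the paper's own architecture: the concentration inequality of \cite{nourdin2009density} applied to $M_T$ via the Nualart--Vives representation $D_\cdot M_T=\sigma_\cdot\sqrt{1-\rho^2}\,\1_{[0,\tau]}(\cdot)$ (giving $\|DM_T\|_\H^2\le\beta^2 T$), and then the Florit--Nualart local criterion plus the representation $p(z)=\E[\1_{\{M_T>z\}}\,\delta(u_A/\gamma_A)]$ for the density bound. The one genuine divergence is part (i): you deduce it from (ii), whereas the paper proves it independently and \emph{before} any concentration argument, by introducing the Hölder-norm functional $Y_r=\int_{[0,r]^2}|X_t-X_s|^{2p_0}|t-s|^{-\gamma_0}\,dt\,ds$, showing via Garsia--Rodemich--Rumsey that $\{Y_T\le R_T(b)\}\subset\{M_T\le b\}$ with $R_T(b)\to\infty$, and then using monotonicity of $\1_{\{M_T\ge b\}}$ in $T$ together with dominated convergence. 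Your route is shorter and perfectly valid (the paper itself remarks that a continuity argument suffices for (i)); what the paper's detour buys is that the process $Y$ and the radius $R_T$ are exactly the ingredients needed to build $u_A=\psi(Y_\cdot)/(\sigma_\cdot\sqrt{1-\rho^2})$ and $\gamma_A=\int_0^T\psi(Y_r)\,dr$ in the density step, so the construction is not wasted. One small correction to your sketch of the density bound: the Gaussian shape does not come from exponential-martingale estimates on the Skorohod integrand. In the paper it comes from Cauchy--Schwarz, $p(z)\le\P(M_T\ge z)^{1/2}\,\|\delta(u_A/\gamma_A)\|_{L^2(\Omega)}$, where the first factor is the already-established concentration bound and the second factor is shown to be $O(T^{-1/2})$ through separate moment estimates on $\delta(u_A)$, $\langle D\gamma_A,u_A\rangle$ and the negative moments of $\gamma_A$ (the last requiring the localisation $z\ge x+T^{1/4}$, a restriction you would need to track); you should make that factorisation explicit, since without it the claimed Gaussian decay of $p(z)$ in $z$ has no source.
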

Observe that Theorem \ref{th: main theorem} states that the price of an up-and-in barrier option decays faster than any polynomial as $T\to 0$. In \cite{jafari2025option}, one can deduce that the price of a European call option tends to zero at least at a polynomial speed depending on $H$. In this paper we explicitly prove that up-and-in barrier options tend to zero faster than any polynomial.

From now on, we will use $C$ to denote a positive real constant not depending on $T$ that may differ from line to line but its specific value is not important for the main conclusion of this article.

\section{Short-Time Behaviour Of An Up-And-In Barrier Call Option} \label{sec 5}
In this section we aim to cover the first of the objectives regarding the study of the asymptotic behaviour of an up-and-in barrier call option. We will follow closely the ideas in \cite{dalang2020density} and we adapt them for a process of the form \eqref{model}. We follow the arguments from \cite{nualart1988continuite}, \cite{florit1995local} and \cite{nualart2006malliavin}. The strategy is based on the fact that even it is not true that $\sup_{t \in [0,T]} X_t \in \D^{\infty}$, there exist a random variable defined via the Hölder norm of $X$ that belongs to $\D^{\infty}$ and controls $\sup_{t \in [0,T]} X_t$. We will rely on this random variable to show that $\lim_{T \to 0} \P(M_T \geq b) = 0$ and conclude from there that $\lim_{T \to 0} C_0^b= 0$. Then, once we know that every up-and-in barrier call option tends to zero, we can focus on the speed of convergence. 

In order to prove the first objective, that is, proving that $\lim_{T \to 0} \P(M_T \geq b) = 0$ we need to know the Hölder regularity of the sample paths of $X_t$ in order to define the auxiliary random variables controlling the supremum of $X$. The regularity of the paths of $X$ can be easily derived applying Kolmogorov's continuity criterion, as it is shown in the following result.
\begin{lema}
    Let $X$ be defined as in \eqref{model}. The sample paths of $X$ are $\gamma$-Hölder continuous for $\gamma \in \left(0, \frac{1}{2} \right)$. In particular, for every $p \geq 1$ there exists a constant $C_p > 0$ not depending on $T$ such that
    \[
    \E[|X_t - X_s|^p] \leq C_p|t-s|^{p/2}
    \]
\end{lema}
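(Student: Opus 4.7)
The plan is to apply the Burkholder--Davis--Gundy (BDG) inequality to the martingale part of $X$ and a trivial pathwise bound to the drift part, and then invoke Kolmogorov's continuity criterion. Since the conclusion is essentially a restatement of the inequality $\E[|X_t-X_s|^p]\leq C_p|t-s|^{p/2}$ combined with Kolmogorov, the heart of the argument is proving this moment estimate.

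First I would write, for $0\leq s\leq t\leq T$,
\[
X_t - X_s = -\frac{1}{2}\int_s^t \sigma_u^2\,du + \int_s^t \sigma_u\,dZ_u,
\]
so that by the elementary inequality $|a+b|^p\leq 2^{p-1}(|a|^p+|b|^p)$,
\[
\E[|X_t-X_s|^p] \leq 2^{p-1}\left(\frac{1}{2^p}\E\left[\left|\int_s^t \sigma_u^2\,du\right|^p\right] + \E\left[\left|\int_s^t \sigma_u\,dZ_u\right|^p\right]\right).
\]
For the drift term, Hypothesis \ref{hyp 1} gives $\sigma_u^2\leq \beta^2$, so
\[
\E\left[\left|\int_s^t \sigma_u^2\,du\right|^p\right] \leq \beta^{2p}|t-s|^p \leq \beta^{2p}T^{p/2}|t-s|^{p/2},
\]
and since we may assume $T\leq 1$, this is bounded by $\beta^{2p}|t-s|^{p/2}$.

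For the stochastic-integral term, $Z$ is a Brownian motion (being a linear combination of independent Brownian motions with $\rho^2+(1-\rho^2)=1$), so $\int_s^t \sigma_u\,dZ_u$ is a continuous martingale in $t$ with quadratic variation $\int_s^t \sigma_u^2\,du$. Applying the BDG inequality and again using $\sigma_u\leq \beta$ yields
\[
\E\left[\left|\int_s^t \sigma_u\,dZ_u\right|^p\right] \leq C_p'\, \E\left[\left(\int_s^t \sigma_u^2\,du\right)^{p/2}\right] \leq C_p'\beta^p|t-s|^{p/2}.
\]
Combining both bounds gives the desired moment estimate with a constant $C_p$ depending only on $p$, $\beta$, and universal BDG constants, and in particular independent of $T\in(0,1]$.

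Finally, since the estimate $\E[|X_t-X_s|^p]\leq C_p|t-s|^{p/2}$ holds for all $p\geq 1$ and in particular for $p$ arbitrarily large, Kolmogorov's continuity criterion yields a modification of $X$ whose sample paths are $\gamma$-Hölder continuous for every $\gamma<\frac{p/2-1}{p}=\frac{1}{2}-\frac{1}{p}$; letting $p\to\infty$ we recover Hölder regularity of any exponent $\gamma\in(0,1/2)$. There is no real obstacle here beyond checking that all constants can be taken independent of $T$ (which follows from the uniform boundedness of $\sigma$), so the only mildly delicate point is the bookkeeping needed to ensure $C_p$ does not blow up as $T\to 0$, which is precisely what matters for the short-maturity analysis in the sequel.
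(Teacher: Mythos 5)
Your proof is correct and follows essentially the same route as the paper: bound the drift term pathwise using the boundedness of $\sigma$ and $T\leq 1$, apply the Burkholder--Davis--Gundy inequality to the stochastic integral, and then invoke Kolmogorov's continuity criterion with $p\to\infty$. The only cosmetic difference is that you make the $2^{p-1}$ splitting and the $T$-independence of the constants fully explicit, which the paper leaves implicit.
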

\begin{proof}
    Notice that, since $\sigma$ is a process with continuous sample paths, the map
    \[
    t \mapsto -\frac{1}{2}\int_0^t \sigma_s^2 ds
    \]
    is indeed $\mathcal{C}^1([0,T])$ almost surely. Moreover,
    \[
    \left|\frac{1}{2}\int_s^t \sigma_u^2 du \right|^{p} \leq C|t-s|^{p} \leq C|t-s|^{p/2}
    \]
    because $T \leq 1$. Hence, the Hölder continuity of the paths of $X$ is inherited from the Hölder continuity of the paths of 
    \[
    \int_0^t \sigma_s dZ_s.
    \]
    Now, Burkholder's inequality shows that
    \[
    \E\left[ \left|\int_s^t \sigma_u dZ_u \right|^p\right] \leq c_p \E\left[ \left( \int_s^t \sigma_u^2 du\right)^{p/2}\right] \leq c|t-s|^{p/2}.
    \]
    Kolmogorov's continuity criterion shows that the sample paths of the stochastic integral of $\sigma$ with respect to $Z$ are of $\gamma$-Hölder continuous with $\gamma \in \left(0, \frac{1}{2}-\frac{1}{p}\right)$ for all $p \geq 1$. Letting $p \to \infty$ we conclude the result.
\end{proof}
Notice that up to this point, one can show that $\lim_{T \to 0} \P(M_T \geq b) = 0$ using a continuity argument. Nevertheless, since the definition of the auxiliary random variables controlling $M_T$ is not only useful to prove that $\lim_{T \to 0} \P(M_T \geq b) = 0$ but it is also key in the estimation of the density of $M_T$ we will prove that $\lim_{T \to 0} \P(M_T \geq b) = 0$ in a more involved way that uses the Hölder norm of $X$. The definition of the random variable that controls $M_T$ is the following.
\begin{defi}
    Consider  $p_0 \in \N$ and let $\gamma_0 \in \R$ such that $p_0- 2 > \gamma_0 > 4$. We define $Y = \{Y_r; r \in [0,T]\}$ as
\[
Y_r := \int_{[0,r]^2} \frac{(X_t - X_s)^{2p_0}}{|t-s|^{\gamma_0}} dt ds.
\]
\end{defi}
Before discussing the relation between $Y$ and $M_T$ we shall check that indeed the process $Y$ is well defined.
\begin{lema}
 For any $r \in [0,T]$, $Y_r \in \bigcap_{p \geq 1} L^p(\Omega)$. In particular, for every $p \geq 1$ there exists a constant $C > 0$ not depending on $T$ such that
    \[
    \E[|Y_r|^p] \leq C r^{2p} T^{(p_0 - \gamma_0)p}.
    \]
\end{lema}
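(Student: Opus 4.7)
The plan is to bound $\|Y_r\|_{L^p(\Omega)}$ by moving the $L^p(\Omega)$-norm inside the double integral via Minkowski's integral inequality, then invoke the Hölder regularity of $X$ established in the previous lemma to control the resulting moments of the increments.

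Concretely, since the integrand $(X_t - X_s)^{2p_0}/|t-s|^{\gamma_0}$ is nonnegative, Minkowski's integral inequality yields
\[
\|Y_r\|_{L^p(\Omega)} \leq \int_{[0,r]^2} \frac{\bigl\|(X_t - X_s)^{2p_0}\bigr\|_{L^p(\Omega)}}{|t-s|^{\gamma_0}} \, dt \, ds
= \int_{[0,r]^2} \frac{\E\bigl[|X_t - X_s|^{2p_0 p}\bigr]^{1/p}}{|t-s|^{\gamma_0}} \, dt \, ds.
\]
Applying the previous lemma with exponent $2p_0 p$ in place of $p$ gives a constant $C > 0$ independent of $T$ such that
\[
\E\bigl[|X_t - X_s|^{2p_0 p}\bigr] \leq C \, |t-s|^{p_0 p},
\]
so that $\E[|X_t - X_s|^{2p_0 p}]^{1/p} \leq C^{1/p} |t-s|^{p_0}$. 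Substituting back,
\[
\|Y_r\|_{L^p(\Omega)} \leq C \int_{[0,r]^2} |t-s|^{p_0 - \gamma_0} \, dt \, ds.
\]

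The key observation is that the exponent $p_0 - \gamma_0$ is strictly positive: indeed, by the hypothesis $p_0 - 2 > \gamma_0$, we have $p_0 - \gamma_0 > 2$, which simultaneously guarantees integrability and allows the uniform bound $|t-s|^{p_0 - \gamma_0} \leq T^{p_0 - \gamma_0}$ for all $t, s \in [0,r] \subset [0,T]$. Pulling this factor out leaves only $\int_{[0,r]^2} dt \, ds = r^2$, so
\[
\|Y_r\|_{L^p(\Omega)} \leq C \, r^2 \, T^{p_0 - \gamma_0},
\]
and raising to the $p$-th power yields the claimed estimate $\E[|Y_r|^p] \leq C \, r^{2p} \, T^{(p_0 - \gamma_0)p}$.

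There is essentially no obstacle here: the proof is a direct combination of Minkowski's integral inequality and the earlier Hölder continuity estimate, and the hypothesis $p_0 - 2 > \gamma_0 > 4$ is tailored precisely so that $p_0 - \gamma_0 > 0$, making the integral trivially finite. The separation of the bound into the factors $r^{2p}$ (from the area of the integration square) and $T^{(p_0 - \gamma_0)p}$ (from controlling $|t-s|$ by $T$) will later be convenient when $Y_r$ is used to majorize the supremum of $X$ on $[0,r]$ and when letting $T \to 0$.
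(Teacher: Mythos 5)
Your proof is correct and follows essentially the same route as the paper's: the paper pushes the $p$-th power inside the double integral via H\"older's (Jensen's) inequality, picking up the factor $r^{2(p-1)}$, whereas you use Minkowski's integral inequality and recover the same factor by raising to the $p$-th power at the end; the remaining ingredients (the increment moment bound $\E[|X_t-X_s|^{2p_0p}]\leq C|t-s|^{p_0p}$, the positivity of $p_0-\gamma_0$, and the split into $r^{2p}$ and $T^{(p_0-\gamma_0)p}$) are identical. No gaps.
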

\begin{proof}
    Hölder's inequality applied to $p \geq 1$ shows that
    \begin{align*}
    \E[|Y_r|^p] \leq &r^{2(p-1)} \int_{[0,r]^2} \frac{\E[|(X_t - X_s)^{2p_0 p}]}{|t-s|^{\gamma_0 p }} dt ds \\
    \leq & C r^{2(p-1)}\int_{[0,r]^2} \frac{|t-s|^{p_0p}}{|t-s|^{\gamma_0 p}} dt ds \\
    \leq &C r^{2p} T^{(p_0 - \gamma_0)p},
    \end{align*}
    as desired.
\end{proof}
The relation between the process $Y$ and the supremum of $X$ relies on the fact that the cumulative distribution function of $Y_T$ is bounded by the cumulative distribution function of $M_T$. Moreover, since $Y_T$ is more regular than $M_T$, it is easier to deduce properties on $Y_T$ and transfer them to $M_T$. The relation between the cumulative distribution function of both random variables is a consequence of the Garsia-Rodemich-Rumsey lemma.
\begin{prop}
    For every $\beta > 0$ there exists a constant $C > 0$ such that if $$Y_r \leq C \beta^{2p_0} T^{\frac{4-\gamma_0}{2}},$$ then 
    $$\sup_{t \in [0,r]} |X_t - x|\leq \beta.$$
\end{prop}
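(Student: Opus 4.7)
The plan is to invoke the Garsia--Rodemich--Rumsey (GRR) lemma, which is the standard tool for converting a double-integral bound of exactly the form defining $Y_r$ into a uniform Hölder-type estimate on $X$ over $[0,r]$. Once we have such a pointwise bound on $|X_t - X_s|$, setting $s = 0$ (so that $X_0 = x$) and inserting the hypothesis on $Y_r$ will yield the claim after routine exponent bookkeeping.

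Concretely, I would apply GRR on $[0,r]$ with $\Psi(x) = x^{2p_0}$ and $p(u) = u^{\gamma_0/(2p_0)}$, so that $\Psi(|X_t - X_s|/p(|t-s|)) = (X_t - X_s)^{2p_0}/|t-s|^{\gamma_0}$ and the GRR hypothesis is exactly that the double integral on $[0,r]^2$ equals $Y_r$. Since $\Psi^{-1}(y) = y^{1/(2p_0)}$ and $dp(u) = \tfrac{\gamma_0}{2p_0} u^{\gamma_0/(2p_0)-1}\,du$, the GRR conclusion gives
\[
|X_t - X_s| \leq 8 \int_0^{|t-s|} \left( \frac{4 Y_r}{u^2} \right)^{1/(2p_0)} \frac{\gamma_0}{2p_0}\, u^{\gamma_0/(2p_0)-1}\,du = C\, Y_r^{1/(2p_0)}\,|t-s|^{(\gamma_0-2)/(2p_0)},
\]
for all $s,t \in [0,r]$, where $C = C(p_0, \gamma_0)$ and the integral converges at $0$ because $\gamma_0 > 4 > 2$ forces the exponent $(\gamma_0-2)/(2p_0) - 1$ to be strictly greater than $-1$.

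Taking $s = 0$ and the supremum over $t \in [0,r]$ yields $\sup_{t \in [0,r]} |X_t - x| \leq C\, Y_r^{1/(2p_0)}\, r^{(\gamma_0-2)/(2p_0)}$. Substituting the hypothesis $Y_r \leq C' \beta^{2p_0} T^{(4-\gamma_0)/2}$ and using $r \leq T \leq 1$ together with $(\gamma_0-2)/(2p_0) > 0$, the right-hand side is bounded by $C\,(C')^{1/(2p_0)}\,\beta\, T^{(4-\gamma_0)/(4p_0) + (\gamma_0-2)/(2p_0)} = C\,(C')^{1/(2p_0)}\,\beta\, T^{\gamma_0/(4p_0)} \leq C\,(C')^{1/(2p_0)}\,\beta$, where the key algebraic identity is $(4-\gamma_0)/(4p_0) + (\gamma_0-2)/(2p_0) = \gamma_0/(4p_0) > 0$. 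Choosing the constant $C'$ in the hypothesis small enough that $C\,(C')^{1/(2p_0)} \leq 1$ then delivers the desired bound $\sup_{t \in [0,r]} |X_t - x| \leq \beta$. I do not foresee any genuine obstacle: the argument is a single invocation of GRR together with careful tracking of the powers of $T$, with $\gamma_0 > 4$ comfortably covering the integrability requirement at $0$.
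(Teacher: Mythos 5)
Your argument is correct and follows essentially the same route as the paper: a single application of the Garsia--Rodemich--Rumsey lemma with $\Psi(x)=x^{2p_0}$ and $p(u)=u^{\gamma_0/(2p_0)}$, followed by setting $s=0$ and tracking the powers of $T$. The only (harmless) difference is that you use the classical interval form of GRR with the Euclidean distance, obtaining the Hölder exponent $(\gamma_0-2)/(2p_0)$, whereas the paper uses the metric-space version with $\rho(t,s)=|t-s|^{1/2}$ and gets $(\gamma_0-4)/(4p_0)$; in either case the resulting power of $T$, namely $\gamma_0/(4p_0)$ for you and $0$ for the paper, is nonnegative, so the conclusion follows.
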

\begin{proof}
     We apply the Garsia-Rodemich-Rumsey lemma as in \cite[Lemma A.3.1]{nualart2006malliavin} or \cite[Proposition A.1]{dalang2007hitting} with the following configuration
    \begin{align*}
        &S = [0,r], \quad \rho(t,s) = |t-s|^{1/2}, \quad \mu(dt) = dt \\
        & \Psi(x) = x^{2p_0}, \quad p(x) = x^{\gamma_0 / 2p_0}, \quad f = X
    \end{align*}
    in order to obtain that
    \begin{align*}
    |X_t - X_s| \leq &8 \sup_{x \in [0,r]} \int_0^{2 \rho(t,s)} \Psi^{-1} \left( \frac{Y_r}{\mu(B_{\rho}(x, u/2))^{2}}\right) p(du) \\ 
    \leq & 8 \int_0^{2 \rho(t,s)} \frac{Y^{1/2p_0}}{\mu(B_{\rho}(s, u/2))^{2/p_0}} u^{\frac{\gamma_0}{2p_0}-1} du \\ 
    \leq & C Y_r^{\frac{1}{2p_0}} \int_0^{2\rho(t,s)} u^{\frac{-2}{p_0}}u^{\frac{\gamma_0}{2p_0} -1} du \\
    \leq & CY_r^{\frac{1}{2p_0}} |t-s|^{\frac{\gamma_0 - 4}{4p_0}}.
    \end{align*}
    Therefore, if we let $Y_r \leq C^{-2p_0} {\beta}^{2p_0} T^{\frac{4-\gamma_0}{2}}$ then
    \[
    |X_t - X_s| \leq C \cdot C^{-1} \beta T^{\frac{4-\gamma_0}{4p_0}} |t-s|^{\frac{\gamma_0 - 4}{4p_0}} \leq \beta.
    \]
    The result now follows from taking $s = 0$ and supremums on both sides of the previous inequality.
\end{proof}
We can also deduce the following immediate consequence.
\begin{corol} \label{dominator}
    For every $\beta > x$ there exists a constant $C > 0$ such that if
    \[
    Y_r \leq R_T(\beta) := C (\beta-x)^{2p_0} T^{\frac{4-\gamma_0}{2}} 
    \]
    then
    \[
    \sup_{t \in [0,r]} X_t\leq \beta.
    \]

\end{corol}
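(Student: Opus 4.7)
The plan is to treat this as a direct corollary of the preceding proposition, with essentially no new work required beyond a shift of the reference level. The preceding proposition controls the oscillation $\sup_{t \in [0,r]} |X_t - x|$ in terms of $Y_r$, so to recover a bound on $\sup_{t \in [0,r]} X_t$ itself, the natural move is to apply the proposition to the excursion size $\beta - x$, which is strictly positive by the hypothesis $\beta > x$.

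Concretely, I would proceed as follows. Let $C_0 > 0$ denote the constant from the preceding proposition, so that whenever $Y_r \leq C_0 \eta^{2p_0} T^{(4-\gamma_0)/2}$ for some $\eta > 0$ we have $\sup_{t \in [0,r]} |X_t - x| \leq \eta$. Choosing $\eta = \beta - x$ and setting $C := C_0$ in the definition of $R_T(\beta)$, the hypothesis $Y_r \leq R_T(\beta) = C(\beta - x)^{2p_0} T^{(4-\gamma_0)/2}$ delivers $\sup_{t \in [0,r]} |X_t - x| \leq \beta - x$. In particular, for every $t \in [0,r]$ we get $X_t - x \leq |X_t - x| \leq \beta - x$, i.e.\ $X_t \leq \beta$, and taking the supremum on the left concludes the argument.

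There is no real obstacle here, since the previous proposition already carries all the analytic content (the Garsia--Rodemich--Rumsey estimate and the choice of $\Psi$, $p$, $\rho$). The only thing to be careful about is that the constant $C$ in the statement of the corollary is renamed from the one in the proposition and that it does not depend on $T$ or on $\beta$ — both of which follow directly from the proposition, since its constant is purely combinatorial in $p_0$ and $\gamma_0$.
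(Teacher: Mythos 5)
Your proposal is correct and coincides with the paper's own proof: both simply invoke the preceding proposition with the excursion level $\beta - x > 0$ to get $\sup_{t\in[0,r]}|X_t - x|\leq \beta - x$ and then add $x$ back. No further comment is needed.
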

\begin{proof}
    Let $C > 0$ the constant such that if $ Y_r \leq C (\beta-x)^{2p_0} T^{\frac{4-\gamma_0}{2}}$ then $\sup_{t \in [0,r]}|X_t-x|\leq \beta-x$. Its existence is ensured by the previous proposition. Now, if $Y_r \leq  C (\beta-x)^{2p_0} T^{\frac{4-\gamma_0}{2}} $ then $\sup_{t \in [0,r]} X_t \leq \beta$ as claimed.
\end{proof}
The dynamics of both $X$ and $M_T$ are hard to study, however, the sample paths of $Y$ are monotonically increasing. This, together with Corollary \ref{dominator} allows us to find the limit as the maturity tends to zero of an up-and-in barrier call option.
\begin{prop}
    Let $C_0^b$ a an up-and-in barrier call option with underlying $S$, strike $K$, maturity $T$ and barrier $B > S_0$. Then,
    \[
    \lim_{T \to 0} C_0^b = 0.
    \]
\end{prop}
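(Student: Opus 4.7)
The plan is to leverage the Hölder bound
\[
C_0^b \leq \|(S_T - K)_+\|_{L^p(\Omega)}\,\P(M_T \geq b)^{1/q}
\]
already recorded in Section~\ref{sec 2}, and to show separately that (a) the $L^p$-norm of the payoff stays bounded as $T \to 0$, and (b) $\P(M_T \geq b) \to 0$. Part (b) is the substantive one, and I would obtain it from Corollary~\ref{dominator} by applying Markov's inequality to the smooth dominating random variable $Y_T$.

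For part (a), Hypothesis~\ref{hyp 1} bounds both the drift of $X_T$ (trivially, since $-\tfrac{1}{2}\int_0^T \sigma_s^2\,ds \in [-\beta^2/2, 0]$ for $T \leq 1$) and the quadratic variation of the stochastic integral $\int_0^T \sigma_s\, dZ_s$ (by $\beta^2 T \leq \beta^2$); a standard exponential-moment estimate then yields $\E[e^{pX_T}] \leq C$ uniformly in $T \leq 1$, and consequently $\|(S_T - K)_+\|_{L^p(\Omega)} \leq C$ uniformly in $T$.

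For part (b), Corollary~\ref{dominator} gives the containment
\[
\{M_T \geq b\} \subseteq \{Y_T > R_T(b)\}, \qquad R_T(b) = C(b-x)^{2p_0}\, T^{(4-\gamma_0)/2},
\]
so Markov's inequality with exponent $p \geq 1$ yields
\[
\P(M_T \geq b) \leq \frac{\E[Y_T^p]}{R_T(b)^p}.
\]
Inserting the moment bound $\E[Y_T^p] \leq C\, T^{p(p_0-\gamma_0+2)}$ from the preceding lemma (specialized to $r = T$) produces
\[
\P(M_T \geq b) \leq \frac{C}{(b-x)^{2p_0 p}}\, T^{p\,(p_0 - \gamma_0/2)}.
\]
Since $\gamma_0 > 4$ together with $p_0 > \gamma_0 + 2$ forces $p_0 - \gamma_0/2 > \gamma_0/2 + 2 > 4$, the exponent of $T$ is strictly positive and the bound vanishes as $T \to 0$. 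Combining with part (a) gives the claim.

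The only delicate point is the arithmetic with exponents: one must check that the loss from dividing by $R_T(b)^p$ is strictly dominated by the gain from $\E[Y_T^p]$, which is precisely what the constraints $\gamma_0 > 4$ and $p_0 > \gamma_0 + 2$ guarantee. Everything else—Hölder, Markov, the moment bound on $Y_T$, and Corollary~\ref{dominator}—is already in place in the excerpt. A pleasant side-effect is that, since $p$ can be chosen arbitrarily large, the same argument already foreshadows the faster-than-polynomial decay asserted in Theorem~\ref{th: main theorem}.
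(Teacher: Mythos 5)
Your argument is correct, but it follows a genuinely different route from the paper's. Both proofs hinge on Corollary \ref{dominator}, yet the paper proceeds \emph{qualitatively}: it first disposes of the OTM case via $C_0^b \leq C_0$ and the known European asymptotics, and for the remaining cases it observes that $Y_T \to 0$ pathwise (the sample paths of $Y$ are increasing with $Y_0 = 0$) while $R_T(b) \to \infty$, so that $\P(Y_T \geq R_T(b)) \to 0$; it then needs an extra step — monotonicity of $T \mapsto \1_{\{M_T \geq b\}}$ to upgrade convergence in probability to almost sure convergence, followed by dominated convergence — to transfer this to $C_0^b$. You instead proceed \emph{quantitatively}: Markov's inequality applied to $Y_T$ together with the moment bound $\E[Y_T^p] \leq C\,T^{p(p_0-\gamma_0+2)}$ yields $\P(M_T \geq b) \leq C(b-x)^{-2p_0 p}\,T^{p(p_0-\gamma_0/2)}$, and the exponent check ($p_0 - 2 > \gamma_0 > 4$ gives $p_0 - \gamma_0/2 > 0$) is right. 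Your route avoids the case split and the dominated-convergence subtlety entirely by using the Hölder bound from Section \ref{sec 2}, at the price of having to verify the uniform bound $\sup_{T \leq 1}\|(S_T-K)_+\|_{L^p(\Omega)} < \infty$ — which your exponential-moment argument under Hypothesis \ref{hyp 1} does supply. What your approach buys beyond the paper's is an explicit polynomial rate for $\P(M_T \geq b)$ at arbitrary order $p$, which indeed already foreshadows (though it does not replace) the sharper superpolynomial bounds of Theorem \ref{th: main theorem}; what the paper's softer argument buys is that it needs only first moments of $Y_T$ and no integrability of the payoff beyond $L^1$.
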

\begin{proof}
    On the one hand, if the associated European call option is Out-The-Money (OTM) then, its premium $C_0$ tends to zero as $T$ tends to zero and therefore the result is deduced from the fact that $C_0^b \leq C_0$. For the ATM and ITM cases we will study $\P(M_T\geq b)$ where $b = \log B$ denotes the log-barrier. On the one hand, Corollary \ref{dominator} shows that
    \[
    \P(M_T \geq b) \leq \P(Y_T \geq R_T) = \P(Y_T - R_T \geq 0).
    \]
    On the one hand, the sample paths of $Y$ are continuous and monotonically decreasing with $Y_0 = 0$. On the other hand, since $R_T(b) = C (b-x)^{2p_0} T^{\frac{4-\gamma_0}{2}}$ then it holds that $R_T(b) \to \infty$ as $T \to 0$. Hence, $\lim_{T \to 0} Y_T - R_T(b) = -\infty$ almost surely (and therefore, the limit also holds in probability) concluding that
    \[
    \lim_{T \to 0} \P(Y_T \geq R_T) = 0.
    \]
    Consider now the random variable $\1_{\{M_T \geq b\}}$. Since $\P(M_T \geq b) \to 0$ then $\1_{\{M_T \geq b\}} \to 0$ in $L^1(\Omega)$. Thus, $\1_{\{M_T \geq b\}} \to 0$ in probability. Moreover, since $M_T$ is increasing in $T$, one has that for $T_1 \leq T_2$
    \[
    \1_{\{M_{T_1} \geq b\}} \leq \1_{\{M_{T_2} \geq b\}}
    \]
    so the sequence of random variables $\1_{\{M_T \geq b\}}$ is decreasing as $T \to 0$. Thus, the convergence $\1_{\{M_T \geq b\}} \to 0$ also holds almost surely. Since $(S_T - K)_+ \in L^1(\Omega)$ and $(S_T-K)_+\1_{\{M_T \geq b\}} \to 0$ almost surely we can conclude by the dominated convergence theorem that
    \[
    \lim_{T \to 0} \E[(S_T-K)_+\1_{\{M_T \geq b\}}] = \lim_{T \to 0} C_0^b = 0
    \]
    as we wanted to show.
\end{proof}
\section{A First Asymptotic Result} \label{sec 6}
Now that we know that $\lim_{T \to 0} C_0^b = 0$ we want to analyse how fast does $\P(M_T \geq b)$ tend to zero as $T \to 0$. A first and direct way to estimate the speed of convergence of this term is via a concentration inequality. In other words, given $z \in (x, \infty)$ we want to get an estimate of the tail probability 
\[
\P(M_T \geq z)
\]
and apply it to $z = b$. The literature on concentration inequalities is wide so we can easily find results in the literature that adapt to our situation. The result we will rely on in order to estimate the tail probability of $M_T$ is the following.
\begin{lema} \label{tail}
    Let $F \in \D^{1,2}$ such that $||DF||_{\H} \leq C$ for some constant $C > 0$ almost surely. Then,  for every $\theta \geq 0$,
    \[
    \E\left[ e^{\theta F}\right] \leq e^{\E[F]\theta + (1/2)C^2 \theta ^2}.
    \]
    In particular, for every $\lambda \geq 0$,
    \[
    \P(F - \E[F] \geq \lambda) \leq \exp\left( - \frac{\lambda^2}{2C^2} \right).
    \]
\end{lema}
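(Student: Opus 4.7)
The plan is to first prove the exponential moment bound, which identifies $F$ as sub-Gaussian with variance proxy $C^2$, and then to deduce the tail inequality by a one-line Chernoff argument. The moment generating function bound will come from a differential inequality for $\phi(\theta):=\E[e^{\theta F}]$, obtained through the Malliavin covariance identity combined with a contraction property of the Ornstein--Uhlenbeck semigroup. A harmless preliminary step is to truncate $F$ by $F_N=\psi_N(F)$ with a smooth $1$-Lipschitz cutoff: this makes the exponential moments finite and the chain rule unambiguous, and since $\|DF_N\|_{\H}\le\|DF\|_{\H}\le C$ a.s., once the estimate is established for bounded $F$ one recovers it for the general case by monotone/dominated convergence.

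For bounded $F$, differentiating under the expectation gives $\phi'(\theta)=\E[Fe^{\theta F}]$, and the chain rule yields $D(e^{\theta F})=\theta e^{\theta F}DF$. Applying the Malliavin covariance formula
\[
\Cov(F,G)=\E\bigl[\langle DG,-DL^{-1}(F-\E[F])\rangle_{\H}\bigr],
\]
where $L$ denotes the Ornstein--Uhlenbeck operator on Wiener space and $L^{-1}$ its pseudo-inverse on centred functionals, with $G=e^{\theta F}$ produces
\[
\phi'(\theta)-\E[F]\phi(\theta)=\theta\,\E\bigl[e^{\theta F}\langle DF,-DL^{-1}(F-\E[F])\rangle_{\H}\bigr].
\]

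The critical estimate is the almost-sure bound $\|-DL^{-1}(F-\E[F])\|_{\H}\le C$. I would obtain it through the Mehler representation
\[
-DL^{-1}(F-\E[F])=\int_0^\infty e^{-t}P_t(DF)\,\d t,
\]
where $(P_t)_{t\ge 0}$ is the Ornstein--Uhlenbeck semigroup acting on $\H$-valued Wiener functionals: since $P_t$ is realised as a conditional expectation via Mehler's formula, Jensen's inequality in the Hilbert space $\H$ yields $\|P_t(DF)\|_{\H}\le C$ a.s., and integration against $e^{-t}\,\d t$ preserves the bound. Combined with Cauchy--Schwarz and the hypothesis $\|DF\|_{\H}\le C$, this gives $\phi'(\theta)\le(\E[F]+\theta C^2)\phi(\theta)$, equivalently $(\log\phi)'(\theta)\le\E[F]+\theta C^2$. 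Since $\log\phi(0)=0$, integrating on $[0,\theta]$ delivers $\phi(\theta)\le\exp(\E[F]\theta+\tfrac{1}{2}C^2\theta^2)$.

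The tail bound is then a textbook Chernoff-plus-optimisation step: for $\theta,\lambda\ge 0$,
\[
\P(F-\E[F]\ge\lambda)\le e^{-\theta\lambda}\E[e^{\theta(F-\E[F])}]\le\exp\!\bigl(-\theta\lambda+\tfrac{1}{2} C^2\theta^2\bigr),
\]
and minimising in $\theta$ with the choice $\theta=\lambda/C^2$ produces the claimed $\exp(-\lambda^2/(2C^2))$. The only step demanding genuine Malliavin input is the $\H$-norm contraction for $-DL^{-1}(F-\E[F])$ via the Mehler formula; this is what I expect to be the main technical point. The rest of the argument is the standard exponential-martingale/Herbst-type manipulation and a one-variable optimisation.
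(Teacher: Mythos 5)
Your proof is correct and is precisely the standard argument behind this concentration inequality: the covariance identity via $-DL^{-1}$, the Mehler-formula contraction bound $\|{-DL^{-1}(F-\E[F])}\|_{\H}\le C$, the resulting differential inequality for the moment generating function, and Chernoff. The paper itself does not prove the lemma but simply cites \cite{nourdin2009density} and \cite{ustunel2010analysis}, and your argument reproduces the proof found in those references (with the truncation step handled appropriately), so it is essentially the same approach.
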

\begin{proof}
    See for instance \cite{nourdin2009density} or  \cite{ustunel2010analysis} (Theorem 9.1.1).
\end{proof}
In order to apply this generic concentration inequality to $M_T$ we have to show that $M_T \in \D^{1,2}$ and $||DM_T||_{\H} \leq C$ for some constant $C > 0$ almost surely. The following technical results deal with the Malliavin regularity of $M_T$ and the computation of its Malliavin derivative. From now on, we will make use of the fact that $\rho \in (-1,1)$ and we will perform all the Malliavin calculus techniques with respect to the Brownian motion $B$ that, recall, is independent from the Brownian motion $W$ that drives the volatility, and therefore $B$ is independent of the volatility process $\sigma$. Hence, without loss of generality we will denote by $D$ the Malliavin derivative with respect to $B$, $\delta$ the Skorohod integral with respect to $B$ and $\D^{k,p}$ the Malliavin spaces under the operator $D$.

The following lemma will help us to prove the regularity of $M_T$ and find an almost explicit expression of its Malliavin derivative.
\begin{lema}
    The process $X$ attains its maximum in the interval $[0,T]$ at a unique random point $\tau \in [0,T]$.
\end{lema}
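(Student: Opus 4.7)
Existence of $\tau$ follows from the continuity of the paths of $X$ on the compact interval $[0,T]$ proved in the previous lemma, so the content of the statement is the uniqueness. The plan is to condition on $\F_T^W$ and reduce the problem to the classical uniqueness of the argmax of a Brownian motion plus a continuous drift. I start by decomposing
\[
X_t \;=\; g(t) \;+\; \sqrt{1-\rho^2}\,N_t, \qquad g(t) = x - \tfrac12\int_0^t \sigma_s^2\d s + \rho\int_0^t \sigma_s\d W_s, \qquad N_t=\int_0^t \sigma_s\d B_s.
\]
Because $\sigma$ is $\mathbb{F}^W$-adapted and $B$ is independent of $W$, the drift $g$ is $\F_T^W$-measurable and continuous in $t$, while, conditionally on $\F_T^W$, the process $N$ is a continuous centred Gaussian martingale whose quadratic variation $\phi(t):=\int_0^t \sigma_s^2\d s$ is deterministic and, by Hypothesis \ref{hyp 1}, a strictly increasing homeomorphism of $[0,T]$ onto $[0,\phi(T)]$.

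Setting $\tilde B_s := N_{\phi^{-1}(s)}$ for $s\in [0,\phi(T)]$, a direct computation of the conditional covariance structure shows that, given $\F_T^W$, the process $\tilde B$ is a standard Brownian motion. With $h(s):=g(\phi^{-1}(s))$, which is continuous in $s$ under the conditioning, the change of variable $s=\phi(t)$ produces the identity
\[
\sup_{t\in[0,T]} X_t \;=\; \sup_{s\in[0,\phi(T)]}\!\bigl(h(s) + \sqrt{1-\rho^2}\,\tilde B_s\bigr),
\]
and, since $\phi$ is a bijection between $[0,T]$ and $[0,\phi(T)]$, the argmax of $X$ on $[0,T]$ is unique if and only if the argmax of the process $Y_s := h(s) + \sqrt{1-\rho^2}\,\tilde B_s$ on $[0,\phi(T)]$ is unique.

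To establish uniqueness of the argmax of $Y$ conditionally on $\F_T^W$, I would use the standard countable-union argument. If $Y$ had two distinct maximizers, there would exist rationals $q_1<q_2$ in $[0,\phi(T)]$ with $\sup_{[0,q_1]} Y = \sup_{[q_2,\phi(T)]} Y$, which after rearrangement reads
\[
\sup_{s\in[0,q_1]} Y_s \;-\; \bigl(h(q_2)+\sqrt{1-\rho^2}\,\tilde B_{q_2}\bigr) \;=\; \tilde W_{q_1,q_2},
\]
where $\tilde W_{q_1,q_2}:=\sup_{s\in[q_2,\phi(T)]}\!\bigl(h(s)-h(q_2)+\sqrt{1-\rho^2}(\tilde B_s-\tilde B_{q_2})\bigr)$. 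The left-hand side is $\sigma(\tilde B_u;\,u\leq q_2)\vee\F_T^W$-measurable, whereas $\tilde W_{q_1,q_2}$ depends only on the increments of $\tilde B$ after $q_2$ and on the deterministic drift $h$; by the independent-increments property of $\tilde B$ (under the conditioning), its conditional law given $\sigma(\tilde B_u;\,u\leq q_2)\vee\F_T^W$ coincides with the $\F_T^W$-conditional law of the supremum of a Brownian motion with continuous drift on the non-degenerate interval $[0,\phi(T)-q_2]$. Since that law is atomless, the equality above holds with conditional probability zero; a countable union over the pairs $(q_1,q_2)\in\mathbb{Q}^2$ finishes the conditional argument, and integration in $\F_T^W$ delivers the lemma.

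The main technical point is the atomlessness of the law of the supremum of a Brownian motion plus a continuous drift on a non-degenerate interval. In the driftless case this follows from the reflection principle, which produces an explicit density. For absolutely continuous drifts one transfers atomlessness by Girsanov's theorem, and a uniform approximation of a general continuous $h$ by smooth drifts transports the no-atoms property to the limit via the $1$-Lipschitz dependence of the supremum on the drift in sup norm. Alternatively one can invoke Tsirelson's theorem, which ensures absolute continuity of the supremum of any non-degenerate continuous Gaussian process.
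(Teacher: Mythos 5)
Your overall architecture is sound and is a genuinely different route from the paper's: where you time-change $N$ into a Brownian motion and re-prove uniqueness of the argmax from scratch via the countable-union argument, the paper conditions on $\F_T^W$, observes that $X$ is then a Gaussian process with $\Var(X_t-X_s\mid\F_T^W)=(1-\rho^2)\int_s^t\sigma_u^2\,du>0$ by Hypothesis \ref{hyp 1}, and directly invokes Lemma 2.6 of Kim and Pollard on uniqueness of the argmax of Gaussian processes with non-degenerate increments. The decomposition, the time change, and the reduction to pairs of rationals are all correct. The gap is in the final technical claim: the law of $\sup_{s\in[0,L]}\bigl(h(s)+cB_s\bigr)$ for a \emph{general} continuous $h$ with $h(0)=0$ is not atomless. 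Take $h(s)=-s^{1/4}$: since $s^{1/4}$ is an upper function for Brownian motion at the origin (Kolmogorov's integral test), the event $\{cB_s\le s^{1/4}\ \forall s\in[0,L]\}$ has positive probability, and on it the supremum equals exactly $0$, giving an atom at the left-endpoint value. Accordingly, your two fallback justifications do not work: uniform approximation by smooth drifts only yields almost sure convergence of the suprema, and atomlessness is not preserved under a.s.\ limits (e.g.\ $G/n\to 0$ with $G$ Gaussian); and Tsirelson's theorem guarantees absolute continuity of the law of a Gaussian supremum only away from a possible atom at the essential infimum of its support, which is exactly where the atom sits here. The Girsanov step is fine but covers only absolutely continuous drifts, whereas for $\rho\neq 0$ your $h$ contains the path $s\mapsto\rho\int_0^{\phi^{-1}(s)}\sigma_u\,\d W_u$, which is a.s.\ nowhere differentiable.

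The repair is standard and keeps your structure intact: instead of conditioning on $\sigma(\tilde B_u;u\le q_2)\vee\F_T^W$ and appealing to atomlessness of $\tilde W_{q_1,q_2}$, condition on $\F_T^W\vee\sigma(\tilde B_u;u\le q_1)\vee\sigma(\tilde B_v-\tilde B_{q_2};v\ge q_2)$. Under this conditioning $\sup_{[0,q_1]}Y$, $h(q_2)$, $\tilde B_{q_1}$ and $\tilde W_{q_1,q_2}$ are all constants, while $\tilde B_{q_2}-\tilde B_{q_1}$ is an independent non-degenerate Gaussian; the equality $\sup_{[0,q_1]}Y-\bigl(h(q_2)+\sqrt{1-\rho^2}\,\tilde B_{q_2}\bigr)=\tilde W_{q_1,q_2}$ then pins this Gaussian increment to a single value and has conditional probability zero. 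This is the classical proof of uniqueness of the Brownian argmax and requires no statement about the law of the supremum at all; alternatively, keep your conditioning, use Tsirelson to locate the only possible atom at $0$, and rule out $\{\mathrm{LHS}=0\}$ by the same Gaussian-increment argument.
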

\begin{proof}
    The argument uses Lemma 2.6 in \cite{kim1990cube}, which states that if $\{Z_t; t \in [0,T]\}$ is a Gaussian process with $Var(Z_t - Z_s) \neq 0$ for $s \neq t$ then the supremum of $Z$ is attained at a unique point almost surely. In particular, it is shown that for two pair of distinct points $t_1, t_2 \in [0,T]$ then
    \[
    \P\left(\sup_{t \in N_1} Z_t  = \sup_{t \in N_2} Z_t \right) = 0
    \]
    for every two neighbourhoods $N_1, N_2$ of $t_1$ and $t_2$ respectively. Now, observe that the process $X$ conditioned to $W$ is a Gaussian process, so
    \[
    \P\left(\sup_{t \in N_1} X_t  = \sup_{t \in N_2} X_t  \bigg| \mathcal{F}^W_T \right) = 0.
    \]
    which implies that
    \[
    \P\left(\sup_{t \in N_1} X_t  = \sup_{t \in N_2} X_t \right) = 0
    \]
    for all possible neighbourhoods $N_1$ and $N_2$ of $t_1$, $t_2$ and every couple of points $t_1 \neq t_2$, concluding this way that the supremum is attained with probability 1 at a unique point.
\end{proof}
The fact that the supremum is attained at a unique point allows us to compute the Malliavin derivative of $M_T$ as a function of the almost sure supremum.
\begin{lema}

The random variable $M_T$ belongs to $\D^{1,2}$ and
    \[
    D_{\cdot}M_T = \sigma_{\cdot} \sqrt{1-\rho^2} \1_{[0,\tau]}(\cdot).
    \]
\end{lema}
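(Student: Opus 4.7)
The plan is to approximate $M_T$ by discrete maxima of $X$ along a sequence of refining partitions and then conclude via the closedness of the operator $D$, following the scheme of \cite{nualart1988continuite}. Because the volatility $\sigma$ is $\mathbb{F}^W$-adapted and $B$ is independent of $W$, every quantity built only from $W$ and $\sigma$ has zero $B$-Malliavin derivative. In particular, the drift $-\tfrac{1}{2}\int_0^t \sigma_s^2\,ds$ and the $W$-integral $\rho\int_0^t \sigma_s\,dW_s$ are killed by $D$, while for the $B$-integral Hypotheses \ref{hyp 1}--\ref{hyp 2} justify commuting $D$ with the Itô integral against $B$, yielding
\[
D_r X_t = \sqrt{1-\rho^2}\,\sigma_r \1_{[0,t]}(r),
\]
so that $X_t \in \D^{1,2}$ with $\|DX_t\|_{\H}^2 \leq (1-\rho^2)\beta^2 T$ uniformly in $t \in [0,T]$.

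Next I would fix a sequence of refining partitions $\pi_n = \{0 = t_0^n < \cdots < t_{k_n}^n = T\}$ whose mesh tends to zero and whose union is dense in $[0,T]$, and set
\[
M_T^n = \max_{0 \leq i \leq k_n} X_{t_i^n}, \qquad \tau_n \in \operatorname{argmax}_{t \in \pi_n} X_t.
\]
By the previous lemma applied to the finite collection $\{X_{t_i^n}\}$, this discrete maximum is attained at a unique index almost surely. The standard chain rule for the Lipschitz function $\max \colon \R^{k_n + 1} \to \R$ (see e.g.\ \cite{nualart2006malliavin}, Prop.~1.2.4) then gives $M_T^n \in \D^{1,2}$ together with
\[
D_r M_T^n = D_r X_{\tau_n} = \sqrt{1-\rho^2}\,\sigma_r \1_{[0,\tau_n]}(r),
\]
and the uniform bound $\|DM_T^n\|_{\H}^2 \leq (1-\rho^2)\beta^2 T$ is inherited from the one for $X_t$.

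Finally I would pass to the limit. Path continuity of $X$ and density of $\bigcup_n \pi_n$ force $M_T^n \uparrow M_T$ almost surely, and since $|M_T^n| \leq \sup_{t \in [0,T]}|X_t|$ lies in every $L^p$ by Burkholder's inequality, the convergence also holds in $L^2(\Omega)$. For the derivatives, any limit point of $(\tau_n)$ is by continuity an argmax of $X$ on $[0,T]$ and therefore coincides with the almost surely unique $\tau$; hence $\tau_n \to \tau$ a.s., so that $\1_{[0,\tau_n]}(r) \to \1_{[0,\tau]}(r)$ for every $r \neq \tau$. Hypothesis \ref{hyp 1} supplies the dominating function $\sqrt{1-\rho^2}\,\beta$, and dominated convergence on $\Omega \times [0,T]$ yields
\[
DM_T^n \longrightarrow \sqrt{1-\rho^2}\,\sigma_{\cdot}\1_{[0,\tau]}(\cdot) \quad \text{in } L^2(\Omega; \H).
\]
Closedness of $D$, combined with $M_T^n \to M_T$ in $L^2(\Omega)$, concludes that $M_T \in \D^{1,2}$ with the claimed Malliavin derivative.

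The only delicate point is the last step: one must simultaneously control the convergence of $M_T^n$ and of $DM_T^n$. The almost sure uniqueness of $\tau$ established in the previous lemma is precisely what upgrades the precompactness of $(\tau_n)$ in $[0,T]$ to a.s.\ convergence, and the uniform bound $\sigma \leq \beta$ supplies the dominating function needed to promote pointwise convergence of the indicators to $L^2(\Omega;\H)$ convergence of the derivatives; everything else reduces to a standard closedness argument.
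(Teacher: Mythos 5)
Your proposal is correct and follows essentially the same route as the paper: discretize the supremum over a dense set, compute $DM_T^n = \sqrt{1-\rho^2}\,\sigma\,\1_{[0,\tau_n]}$ via the (local/Lipschitz) chain rule, use the a.s.\ uniqueness of the argmax to get $\tau_n \to \tau$, and pass to the limit. The only difference is cosmetic: the paper invokes the uniform bound on $\E[\|DM_T^n\|_{\H}^2]$ together with weak convergence of the derivatives (Lemma 1.2.3 of Nualart), whereas you upgrade to strong $L^2(\Omega;\H)$ convergence by dominated convergence and conclude by closedness of $D$ — both are valid.
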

\begin{proof}
    Consider $\{t_n; n \geq 1\}$ a dense subset of $[0,T]$. Define
    \[
    M_T^n = \max \{X_{t_1}, \dots, X_{t_n} \}.
    \]
    Then, $\lim_{n \to \infty} M_T^n = M_T$ and $\lim_{n \to \infty} \tau_n = \tau$ almost surely. Using that the operator $D$ is local, as it is shown in \cite{nualart2006malliavin}, (Section 1.3.5), we have that
    \[
    DM_T^n = \sigma_{\cdot} \sqrt{1-\rho^2} \1_{[0,\tau_n]}(\cdot)
    \]
    where $\tau_n$ is such that $M_T^n = X_{\tau_n}$. On the one hand, since $|M_T^n - M_T|^2 \to 0$ almost surely and the sequence $|M_T - M_T^n|^2$ is monotonically decreasing we find, by the monotone convergence theorem that $M_T^n \to M_T$ in $L^2(\Omega)$. Moreover,
    \[
    \E\left[ || DM_T^n ||^2_{\H}\right] = \E\left[\int_0^{\tau_n}  \sigma^2_t (1-\rho^2) dt\right] 
    \]
    Hence, since $\tau_n \leq T$ we have that
    \[
    \sup_{n \geq 1} \E\left[ || DM_T^n ||^2_{\H}\right] < \infty.
    \]
    Finally, let $\mathcal{M}_{\cdot} = \sigma_{\cdot} \sqrt{1-\rho^2} \1_{[0,\tau]}(\cdot)$. for every $u \in L^2(\Omega; \H)$ we find using the dominated convergence theorem that
    \[
    \lim_{n \to \infty} \E\left[ \int_0^{\tau_n} \sqrt{1-\rho^2}  \sigma_t u_t dt\ \right] = \E\left[ \int_0^{\tau} \sqrt{1-\rho^2}  \sigma_t u_t dt\ \right]
    \]
    so $D M^n_T$ converges to $\mathcal{M}$ in the weak topology of $L^2(\Omega; \H)$. This allows us to conclude, thanks to \cite{nualart2006malliavin} (Lemma 1.2.3) that $M_T \in \D^{1,2}_B$ and $D_{\cdot}M_T = \sigma_{\cdot} \sqrt{1-\rho^2} \1_{[0,\tau]}(\cdot).$ as desired.
\end{proof}
From this lemmas we are able to construct a tail estimate of the probability of reaching the barrier.
\begin{teo}[Tail estimate] Let $M_T = \sup_{t \in [0,T]} X_t$. Then there exists $T_0$ such that for $T \leq T_0$ the following tail estimate holds:
    \[
    \P\left(M_T \geq b \right) \leq \exp\left( -\frac{(b-x)^2}{C^2 T} \right).
    \]
\end{teo}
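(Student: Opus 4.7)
The plan is to feed $M_T$ into Lemma \ref{tail} and then carefully handle the mean $\E[M_T]$, which the concentration inequality produces as an unwanted shift. From the previous lemma we already know $M_T \in \D^{1,2}$ with $D_{\cdot} M_T = \sigma_{\cdot}\sqrt{1-\rho^2}\1_{[0,\tau]}(\cdot)$. Hypothesis \ref{hyp 1} then gives the deterministic pathwise bound
\[
\|DM_T\|_{\H}^2 = (1-\rho^2)\int_0^{\tau}\sigma_s^2\,ds \leq (1-\rho^2)\beta^2\, T,
\]
so Lemma \ref{tail} applies with constant $C_T := \beta\sqrt{(1-\rho^2)T}$, yielding, for every $\lambda \geq 0$,
\[
\P\bigl(M_T - \E[M_T] \geq \lambda\bigr) \leq \exp\!\left(-\frac{\lambda^2}{2\beta^2(1-\rho^2)T}\right).
\]

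The second step is to show that $\E[M_T] \to x$ as $T \to 0$, so that the shift $b - \E[M_T]$ stays uniformly bounded below by some positive multiple of $b-x$ for $T$ small. Writing $X_t - x = -\tfrac{1}{2}\int_0^t \sigma_s^2\,ds + \int_0^t \sigma_s\,dZ_s$ and using Hypothesis \ref{hyp 1} together with the Burkholder--Davis--Gundy inequality as in the earlier Hölder lemma, I would estimate
\[
\E[M_T] - x \;\leq\; \E\!\left[\sup_{t\in[0,T]}|X_t-x|\right] \;\leq\; \tfrac{1}{2}\beta^2 T + c\,\beta\sqrt{T},
\]
which tends to $0$ as $T \to 0$. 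Pick $T_0 > 0$ so that for all $T \leq T_0$ one has $\E[M_T] - x \leq (b-x)/2$, equivalently $b - \E[M_T] \geq (b-x)/2$.

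For such $T$, the event $\{M_T \geq b\}$ is contained in $\{M_T - \E[M_T] \geq (b-x)/2\}$, and inserting $\lambda = (b-x)/2$ into the concentration bound above gives
\[
\P(M_T \geq b) \;\leq\; \exp\!\left(-\frac{(b-x)^2}{8\beta^2(1-\rho^2)\,T}\right),
\]
which is the claim with $C^2 = 8\beta^2(1-\rho^2)$.

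The only genuinely delicate step is the control of $\E[M_T]$: the concentration inequality is stated in centred form, so without the shift control one merely gets a bound involving the unknown mean of the supremum, and the whole short-time rate would be lost. Everything else, namely the Malliavin differentiability of $M_T$, the explicit form of $DM_T$, and the plug-in into the concentration inequality, is now routine thanks to the two preceding lemmas and Hypothesis \ref{hyp 1}.
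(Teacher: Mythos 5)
Your proposal is correct and follows essentially the same route as the paper: apply the concentration inequality to $M_T$ using the explicit Malliavin derivative and Hypothesis \ref{hyp 1} to get $\|DM_T\|_{\H}\leq C\sqrt{T}$, then absorb the centering term by showing $\E[M_T]\to x$ and choosing $T_0$ so that $b-\E[M_T]$ is bounded below by a fixed multiple of $b-x$. The only difference is that you justify $\E[M_T]\to x$ with an explicit BDG estimate, whereas the paper simply asserts it; this is a welcome extra detail, not a divergence in method.
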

\begin{proof}
    We want to apply the concentration inequality result to $F = M_T.$ Recall that we have proved that $M_T \in \D^{1,2}$ and 
    \[
    DM_T = \sigma \sqrt{1-\rho^2} \1_{[0,\tau]}.
    \]
    Therefore,
    \[
    ||DM_T||_{\H} = \sqrt{\int_0^{\tau} (1-\rho^2) \sigma_t^2 dt} \leq C \sqrt{T}.
    \]
    If we define $m_T = \E[M_T]$ we obtain
    \[
    P(M_T - m_T \geq \lambda) \leq \exp\left( -\frac{\lambda^2}{C^2 T} \right).
    \]
    Therefore, if $b$ denotes the log-barrier we find that
    \[
    P(M_T \geq b) \leq \exp\left( -\frac{(b-m_T)^2}{C^2 T} \right).
    \]
    Finally, since $\lim_{T \to 0}m_T = x$ we find that there exists $T_0$ such that 
    \[
    (b-m_T)^2 \geq \frac{1}{2}(b-x)^2.
    \]
    Renaming the constants, we find that for $T \leq T_0$
    \[
     \P\left(M_T \geq b \right) \leq \exp\left( -\frac{(b-x)^2}{C^2 T} \right)
    \]
    as claimed.
\end{proof}

\section{Estimation Of The Density Of $M_T$} \label{sec 7}
In the previous section we have derived an estimation of the probability of attaining the barrier using an estimation of the cumulative distribution function of $M_T$. In this section we will proceed with more involved Malliavin calculus tools in order to explore if the previous bound can be improved. The objective of this section will be exploring the density function of $M_T$ and approximate the cumulative distribution function of $M_T$ using its density. Hence, we shall first show that the density exists and it is smooth. In order to check this property we will rely on the local criterion of existence and smoothness of densities for $\D^{1,2}$ random variables stated in Section \ref{sec 3}.

To be able to apply this criterion we need to find a set $A$, a stochastic process $u_A \in \D^{\infty}$ and a random variable $\gamma_A \in \D^{\infty}$ such that the equality $\langle DM_T, u_A\rangle = \gamma_A$ holds on the set $A$. We will rely on the process $Y$ to construct such $u_A$ and $\gamma_A$, since its behaviour is closely related to $M_T$ and it is Malliavin regular, as the following two results shows:

\begin{lema} \label{Y regular}
    For any $r \in [0,T]$, $Y_r \in \D^{\infty}$ and
    \[
    D^k Y_r = \int_{[0,r]^2} \frac{2p_0(2p_0-1) \cdots (2p_0 - k+ 1) (X_t- X_s)^{2p_0 - k}}{|t-s|^{\gamma_0}} D^k(X_t- X_s) dt ds.
    \]
\end{lema}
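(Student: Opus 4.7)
The plan is to combine the chain rule for Malliavin derivatives with a Fubini-type argument in order to pass $D^k$ inside the double Lebesgue integral that defines $Y_r$. The crucial simplification is that $D$ is the Malliavin derivative with respect to $B$, while $\sigma$ is $\mathbb{F}^W$-adapted and therefore independent of $B$; in this calculus $D\sigma\equiv 0$, so $\sigma$ behaves as a deterministic coefficient.

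First I would establish that $X_t\in\D^\infty$ for every $t\in[0,T]$. The drift $-\tfrac12\int_0^t\sigma_s^2\,ds$ and the $W$-integral $\int_0^t\sigma_s\,dW_s$ have vanishing Malliavin derivative with respect to $B$, while the standard formula for Wiener integrals gives $D_u\bigl(\int_0^t\sigma_s\,dB_s\bigr)=\sigma_u\1_{[0,t]}(u)$. Consequently
\[
D_u X_t=\sqrt{1-\rho^2}\,\sigma_u\1_{[0,t]}(u),\qquad D^k X_t=0 \text{ for } k\ge 2,
\]
and Hypothesis \ref{hyp 1} yields the almost sure bound $\|DX_t\|_{\H}\le\beta\sqrt{1-\rho^2}\sqrt{T}$, so $X_t\in\bigcap_{k,p}\D^{k,p}$ uniformly in $t$. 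Applying the chain rule to $\phi(y)=y^{2p_0}$ composed with $X_t-X_s$, and exploiting that all higher-order Malliavin derivatives of $X_t-X_s$ vanish, the Fa\`a di Bruno expansion collapses to the single term $\phi^{(k)}(X_t-X_s)=2p_0(2p_0-1)\cdots(2p_0-k+1)(X_t-X_s)^{2p_0-k}$ paired with the appropriate $k$-fold first-derivative factor, producing the integrand displayed in the statement (with the convention that the formula is trivial once $k>2p_0$).

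The last step is to exchange $D^k$ with the double Lebesgue integral. By closability of $D^k$ this reduces to verifying Bochner integrability of the integrand into $\D^{k,p}$ against the measure $|t-s|^{-\gamma_0}\,dt\,ds$ for each $p\ge 1$. Combining the moment bound $\E[|X_t-X_s|^{q}]\le C_q|t-s|^{q/2}$ from the previous lemma (which delivers $\|(X_t-X_s)^{2p_0-k}\|_{L^p(\Omega)}\le C|t-s|^{(2p_0-k)/2}$) with the almost sure estimate $\|D(X_t-X_s)\|_{\H}\le C|t-s|^{1/2}$ coming from Hypothesis \ref{hyp 1}, the scalar and tensor contributions combine to give
\[
\Bigl\|\tfrac{\phi^{(k)}(X_t-X_s)}{|t-s|^{\gamma_0}}\,D(X_t-X_s)^{\otimes k}\Bigr\|_{L^p(\Omega;\H^{\otimes k})}\le C\,|t-s|^{p_0-\gamma_0},
\]
which is integrable on $[0,r]^2$ whenever $p_0>\gamma_0-1$ (amply covered by the standing assumption $p_0-2>\gamma_0$). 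The main obstacle is handling this integrability estimate uniformly in $k$ and $p$ and ensuring that the constants do not degenerate; once that is under control, closability of $D^k$ simultaneously yields $Y_r\in\D^{\infty}$ and the claimed identity for $D^k Y_r$.
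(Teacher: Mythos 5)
Your argument is correct and is exactly the standard one the paper defers to via its citation of \cite{pu2018stochastic}: compute $D(X_t-X_s)$ with respect to $B$ (so that $\sigma$ is derivative-free and all higher derivatives of $X_t-X_s$ vanish), collapse Fa\`a di Bruno to $\phi^{(k)}(X_t-X_s)\,(D(X_t-X_s))^{\otimes k}$, and interchange $D^k$ with the double integral by closability plus the moment bounds, which give the integrable majorant $C|t-s|^{p_0-\gamma_0}$. The only superfluous worry is your final ``main obstacle'': membership in $\D^{\infty}=\bigcap_{k,p}\D^{k,p}$ is verified for each pair $(k,p)$ separately, so no uniformity of the constants in $k$ or $p$ is required.
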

\begin{proof}
    The proof follows the same ideas as in \cite{pu2018stochastic}.
\end{proof}
We can in fact go further on the study of the regularity of $Y$. Indeed, not only the moments of $Y_r$ and its derivatives are finite for every $r \in [0,T]$, but they are uniformly bounded in $r \in [0,T]$ as we show in the upcoming lemma.
\begin{lema}
    For any $p \geq 1$ and for every integer $k$,
    \[
    \sup_{r \in [0,T]} \E\left[||D^kY_r||^p_{\H ^{\otimes k}}\right] < \infty.
    \]
\end{lema}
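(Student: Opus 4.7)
The plan is to start from the explicit formula for $D^k Y_r$ in Lemma \ref{Y regular} and reduce the estimate to the moment bounds on the increments of $X$ already obtained in Section \ref{sec 5}. First I would note that, because $\sigma$ is $\mathbb{F}^W$-adapted and the Malliavin derivative $D$ is taken with respect to the independent Brownian motion $B$, all Malliavin derivatives of order $\geq 2$ of the increment $X_t - X_s$ vanish. Consequently the chain rule applied to the polynomial $x \mapsto x^{2p_0}$ yields, for $k\leq 2p_0$,
\[
D^k_{z_1,\dots,z_k}(X_t - X_s)^{2p_0} = c_{k,p_0}\,(X_t - X_s)^{2p_0 - k}\,\bigotimes_{i=1}^{k} D_{z_i}(X_t - X_s),
\]
with $c_{k,p_0} = 2p_0(2p_0-1)\cdots(2p_0-k+1)$ (the identity becoming trivial for $k>2p_0$), and
\[
D_{z}(X_t - X_s) = \sqrt{1-\rho^2}\,\sigma_z\, \1_{[s\wedge t,\,s\vee t]}(z).
\]

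Second, I would apply Minkowski's integral inequality twice, first to pull the $\H^{\otimes k}$-norm and then the $L^p(\Omega)$-norm inside the double integral $\int_{[0,r]^2}\,dt\,ds$. Using Hypothesis \ref{hyp 1}, the purely deterministic factor is bounded pathwise by
\[
\Bigl\|\bigotimes_{i=1}^{k} D(X_t-X_s)\Bigr\|_{\H^{\otimes k}} = \Bigl(\int_{s\wedge t}^{s\vee t}(1-\rho^2)\sigma_u^2\,du\Bigr)^{k/2} \leq C^k |t-s|^{k/2}.
\]
Combining this with the moment estimate $\E[|X_t-X_s|^{p(2p_0-k)}]\leq C|t-s|^{p(2p_0-k)/2}$ proved in Section \ref{sec 5}, the $L^p(\Omega)$-norm of the integrand is controlled by $C\,|t-s|^{(2p_0-k)/2}\,|t-s|^{k/2}/|t-s|^{\gamma_0}=C|t-s|^{p_0-\gamma_0}$, leading to
\[
\bigl\|D^k Y_r\bigr\|_{L^p(\Omega;\H^{\otimes k})} \leq C\int_{[0,r]^2} |t-s|^{p_0-\gamma_0}\,dt\,ds.
\]

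Third, the standing condition $p_0 - 2 > \gamma_0 > 4$ guarantees $p_0 - \gamma_0 > 2$, so the deterministic integral on the right converges and is bounded above by $C r^{p_0-\gamma_0+2} \leq C T^{p_0-\gamma_0+2}$. Since $T\leq 1$, raising to the $p$-th power gives
\[
\sup_{r\in[0,T]} \E\bigl[\|D^k Y_r\|^p_{\H^{\otimes k}}\bigr] \leq C < \infty,
\]
which is the desired conclusion. I expect the only mildly delicate step to be the reinterpretation of $D^k(X_t-X_s)$ appearing in Lemma \ref{Y regular}: once one notes that higher Malliavin derivatives of $X_t - X_s$ with respect to $B$ vanish, the Faa di Bruno expansion collapses to the single tensor-product term displayed above, and then Minkowski's inequality is applicable in a routine way. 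The remainder of the argument is the same moment computation used to prove $Y_r\in L^p(\Omega)$, with one extra factor of $|t-s|^{k/2}$ from the tensorial derivative exactly compensating the drop of $k$ in the exponent of $|X_t-X_s|$.
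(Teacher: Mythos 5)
Your proof is correct and follows essentially the same route as the paper: the explicit formula for $D^kY_r$, the pathwise bound $\|D(X_t-X_s)\|_{\H}\leq C|t-s|^{1/2}$ from Hypothesis \ref{hyp 1}, the moment estimate $\E[|X_t-X_s|^{q}]\leq C|t-s|^{q/2}$, and the convergence of $\int_{[0,r]^2}|t-s|^{p_0-\gamma_0}\,dt\,ds$; using Minkowski's integral inequality instead of Hölder's on the double integral is only an organizational difference. In fact your treatment is slightly more complete than the paper's, which only writes out $k=1$, whereas you justify the general case by observing that the higher-order $B$-derivatives of $X_t-X_s$ vanish (since $\sigma$ is $\mathbb{F}^W$-adapted), so the chain rule collapses to the single tensor-product term.
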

\begin{proof}
    We give the proof for $k = 1$. The proof for a general $k \geq 1$ uses Lemma \ref{Y regular} and follows the same lines as with the case $k = 1$. For the first order derivative we have
    \[
    DY_r = \int_{[0,r]^2} \frac{2p_0(X_t-X_s)^{2p_0 -1}}{|t-s|^{\gamma_0}} D(X_t-X_s) dt ds.
    \]
    Hence, Hölder's inequality for $p \geq 1$ shows that
    \begin{align*}
    \E\left[ ||DY_r||_{\H}^p \right] \leq c_p r^{2(p-1)} \int_{[0,r]^2} \frac{\E[|X_t-X_s|^{(2p_0-1)p}]}{|t-s|^{\gamma_0 p}} ||D(X_t-X_s)||^p_{\H}.
    \end{align*}
    Now, observe that
    \[
    D_s \sigma_t = \sigma_s \sqrt{1-\rho^2} \1_{[0,t]}(s),
    \]
    so
    \[
    D_u(X_t - X_s) = \sigma_u \sqrt{1-\rho^2} \1_{[t \wedge s, t \vee s]} (u).
    \]
    This implies that
    \[
    \int_0^T ( D_u(X_t - X_s))^2 du = (1-\rho^2)\int_{t \wedge s}^{t \vee s}  \sigma_u^2 du \leq C(1-\rho^2)|t-s|
    \]
    and therefore there exists a constant $C > 0$ such that
    \[
    ||D(X_t-X_s)||^p_{\H} \leq C|t-s|^p
    \]
    This estimate shows that
    \begin{align*}
     \E\left[ ||DY_r||_{\H}^p \right] \leq &c_p r^{2(p-1)}\int_{[0,r]^2} |t-s|^{(p_0-1/2)p + p - \gamma_0 p} dt ds \\
     \leq &c_p r^{2(p-1)}\int_{[0,r]^2} |t-s|^{p(p_0 - \gamma_0 + 1/2)} dt ds \\
     \leq & c_p r^{2(p-1)} r^{p_0p - p\gamma_0 + p/2 + 2} \\
     \leq &c_p r^{5p/2 + p_0p - p\gamma_0} \\
     \leq & c_p r^{p(p_0 - \gamma_0 + 5/2)}.
    \end{align*}
    The result follows from applying supremums to both sides of the inequality.
\end{proof}

We can now construct the setting in order to prove the existence and smoothness of the density. For all $z \in (x, \infty)$ let $a = \frac{x+z}{2}$ and $A = \left(a, \infty \right)$. Let $R_T := R_T(a)$ be the radius such that
\[
Y_r \leq R_T(a) \Rightarrow \sup_{t \in [0,r]} X_t \leq a.
\]Let $\psi_0(x)$ be a smooth function such that $\psi_0(x) = 0$ if $x > 1$, $\psi_0(x) = 0$ if $x \leq \frac{1}{2}$ and $\psi_0(x) \in [0,1]$ if $x \in \left[ \frac{1}{2}, 1\right]$. We define $\psi(x)$ as
\[
\psi(x) = \psi_0\left( \frac{x}{R_T} \right).
\]
Notice that $||\psi'||_{\infty} \leq cR^{-1}_T$. We define
\[
u_A = \frac{\psi(Y_{\cdot})}{\sigma_{\cdot} \sqrt{1-\rho^2}}, \quad \gamma_A = \int_0^T \psi(Y_r) dr.
\]
Now that we have the candidates of $A$, $u_A$ and $\gamma_A$ we have to prove that the criterion holds.
\begin{lema}
    We have $u_A \in \D^{\infty}(\H)$, $\gamma_A \in \D^{\infty}$ and $\langle DM_T, u_A\rangle = \gamma_A$ in the set $A$. In conclusion, $M_T$ has a smooth density.
\end{lema}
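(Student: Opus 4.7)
The plan is to verify the three hypotheses of the local density criterion stated in Section \ref{sec 3}, applied to $F = M_T$, $d=1$, open set $A = (a, \infty)$, and the proposed $u_A$, $\gamma_A$. Three points must be established: the Malliavin regularity of $u_A$ and $\gamma_A$; the integrability $\gamma_A^{-1} \in L^p(\Omega)$ for all $p \geq 1$ (necessary for condition 2 since $d=1$); and the identity $\langle DM_T, u_A\rangle = \gamma_A$ on the event $\{M_T \in A\}$. The observation underpinning all three steps is that, because $D$ is taken with respect to $B$ and $\sigma$ is adapted to $\mathbb{F}^W$ with $W$ independent of $B$, one has $D\sigma \equiv 0$; combined with Hypothesis \ref{hyp 1}, this already places $1/\sigma$ in $\D^{\infty}$ with vanishing derivatives.

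For the regularity, I would combine $1/\sigma \in \D^{\infty}$ with the uniform-in-$r$ bounds on $\|D^k Y_r\|$ established in the preceding two lemmas and apply the chain rule to the smooth bounded function $\psi$; multiplication/integration with $\sigma^{-1}(1-\rho^2)^{-1/2}$ then gives $u_A \in \D^{\infty}(\H)$ and $\gamma_A \in \D^{\infty}$. For the invertibility, I would use $\psi(Y_r) \geq \mathbf{1}_{\{Y_r \leq R_T/2\}}$ together with monotonicity of $r \mapsto Y_r$ to deduce
\[
\gamma_A \;\geq\; \sup\{r \in [0,T] : Y_r \leq R_T/2\} \;=:\; \tau^{*},
\]
and then estimate $\P(\tau^{*} \leq \varepsilon) = \P(Y_\varepsilon \geq R_T/2)$ by Markov's inequality using the bound $\E[|Y_\varepsilon|^p] \leq C \varepsilon^{2p} T^{(p_0-\gamma_0)p}$ from Section \ref{sec 5}; this gives $\P(\tau^{*} \leq \varepsilon) \leq C_{p,T}\,\varepsilon^{2p}$ for every $p$, hence moments of all orders for $1/\tau^{*}$, and therefore for $\gamma_A^{-1}$. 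For the identity, substituting the explicit formula $DM_T = \sigma\sqrt{1-\rho^2}\,\mathbf{1}_{[0,\tau]}$ from Section \ref{sec 6} yields
\[
\langle DM_T, u_A \rangle \;=\; \int_0^\tau \psi(Y_r)\,dr;
\]
on $\{M_T > a\}$ one has $\sup_{t \in [0,\tau]} X_t = X_\tau = M_T > a$, so the contrapositive of Corollary \ref{dominator} forces $Y_\tau > R_T(a)$, monotonicity of $Y$ then gives $\psi(Y_r) = 0$ for $r \in [\tau, T]$, and hence $\int_0^\tau \psi(Y_r)\,dr = \int_0^T \psi(Y_r)\,dr = \gamma_A$. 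Since $z > x$ (and hence $a = (x+z)/2$) is arbitrary, the three hypotheses of the local criterion hold for every such $A$, producing a smooth density for $M_T$ on all of $(x,\infty)$.

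The main obstacle is the invertibility step: one must exploit the monotonicity of $Y$ to reduce the scalar $\gamma_A^{-1}$ to the inverse of the passage time $\tau^{*}$, and then convert the polynomial-in-$\varepsilon$ concentration of $Y_\varepsilon$ near zero into moments of all orders for $1/\tau^{*}$. The regularity statement and the identity are essentially algebraic consequences of the construction once one remembers that the $B$-Malliavin derivative annihilates $\sigma$, which is precisely why $u_A$ is defined to divide by $\sigma\sqrt{1-\rho^2}$ in the first place (so that the cancellation in $\langle DM_T, u_A\rangle$ leaves a quantity amenable to bounding from below).
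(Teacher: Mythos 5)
Your proposal is correct and follows essentially the same route as the paper: the same chain-rule argument for the regularity of $u_A$ and $\gamma_A$ (exploiting that the $B$-derivative annihilates $\sigma$), and the same contradiction/contrapositive argument via Corollary \ref{dominator} for the identity $\langle DM_T, u_A\rangle = \gamma_A$ on $\{M_T \in A\}$. The only difference is organizational: your invertibility step for $\gamma_A^{-1}$ (lower-bounding $\gamma_A$ by the occupation time of $\{Y_r \leq R_T/2\}$ and applying Markov's inequality to $\P(Y_\varepsilon \geq R_T/2)$) is exactly the argument the paper defers to Lemma \ref{aux 3}, so including it here makes the verification of the local criterion self-contained.
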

\begin{proof}
    The proof of the fact $u_A \in \D^{\infty}(\H)$ follows from the fact that $\sigma$ is independent of $B$, $Y \in \D^{\infty}(\H)$, the smoothness of $\psi$ and the chain rule of the Malliavin calculus. Similarly, the fact that $\gamma_A \in \D^{\infty}$ follows directly from the fact that $Y_r \in \D^{\infty}$, $\sup_{r \in [0,T]} ||D^k Y_r||_{\H^{\otimes k}}^p < \infty$ for all $k \geq 1$ and for all $p \geq 1$ and the chain rule of Malliavin calculus.

    Now, observe that
    \[
    \langle DM_T, u_A\rangle = \int_0^{\tau}  \sigma_t \sqrt{1-\rho^2} \frac{\psi(Y_t)}{\sigma_t \sqrt{1-\rho^2}} dt = \int_0^{\tau} \psi(Y_t) dt.
    \]
    We now need to prove that
    \[
    \int_0^{\tau} \psi(Y_t) dt = \int_0^T \psi(Y_t) dt
    \]
    or, equivalently, $\psi(Y_r) = 0$ if $r \in [\tau, T]$. Notice that if $\psi(Y_r) > 0$ if $r > \tau$ then we would have $Y_r \leq R_T$ and therefore, the following holds:
    \[
    M_T = X_{\tau} = \sup_{t \in [0,T]} X_t = \sup_{t \in [0,r]} X_t  \leq a.
    \]
    However, on the set $\{M_T \in A\}$ we have $M_T > a$ so we arrive at a contradiction, proving like this that
    \[
    \int_0^{\tau} \psi(Y_t) dt = \int_0^T \psi(Y_t) dt
    \]
    and therefore $\langle DM_T, u_A\rangle = \gamma_A$.
\end{proof}
This result, even though it's technical and does not tell us an explicit expression of the density function of $M_T$, it allows us to obtain a representation that we will use later on to obtain an estimate.
\begin{prop}
    The probability density function of $M_T$ at the point $z \in (x, \infty)$ is given by
    \[
    p(z) = \E\left[\1_{\{M_T > z\}} \delta\left( \frac{u_A}{\gamma_A}\right)\right]
    \]
\end{prop}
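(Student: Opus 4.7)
The plan is to derive the representation by combining the duality between $D$ and $\delta$ with the local identity $\langle DM_T, u_A\rangle = \gamma_A$ on $\{M_T \in A\}$ just established, and then approximating $\1_{(z,\infty)}$ by smooth test functions whose derivatives are supported in $A$.

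First I would pick any smooth bounded $\phi \in \mathcal{C}^{\infty}(\R)$ with bounded derivative and with $\mathrm{supp}(\phi') \subset A = (a,\infty)$. The Malliavin chain rule gives $D\phi(M_T) = \phi'(M_T)\,DM_T$, so pairing with $u_A$ and using the previous lemma on $\{M_T \in A\}$---noting that both sides vanish off this set because $\phi'(M_T)=0$ there---yields
\[
\phi'(M_T)\,\gamma_A = \langle D\phi(M_T), u_A\rangle \quad \text{a.s.}
\]
On $\{M_T > a\}$ we have $\gamma_A > 0$ almost surely: indeed, the contrapositive of Corollary \ref{dominator} applied at $r=T$ forces $Y_T > R_T(a)$, and since $Y$ is continuous, non-decreasing with $Y_0 = 0$, there exists $r^* \in [0,T)$ with $Y_r \geq R_T(a)$ for every $r \in [r^*,T]$, so $\gamma_A \geq T - r^* > 0$ there. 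Dividing by $\gamma_A$ on that set (the identity being trivial off $\{M_T > a\}$) and taking expectations, the duality between $D$ and $\delta$ produces
\[
\E[\phi'(M_T)] = \E\!\left[\phi(M_T)\,\delta\!\left(\frac{u_A}{\gamma_A}\right)\right].
\]
To make sense of the right-hand side I would invoke Proposition \ref{prop: integral of product} to rewrite $\delta(u_A/\gamma_A) = \gamma_A^{-1}\delta(u_A) + \gamma_A^{-2}\langle D\gamma_A, u_A\rangle$, and combine the fact that $u_A,\gamma_A \in \D^{\infty}$ with $L^p$ control on $\gamma_A^{-1}$ to conclude that $u_A/\gamma_A \in \Dom(\delta)$ with $\delta(u_A/\gamma_A) \in \bigcap_{p \geq 1} L^p(\Omega)$.

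Second I would approximate $\1_{(z,\infty)}$ by a sequence $\phi_n$ of smooth, non-decreasing functions equal to $0$ on $(-\infty, z-\eps_n]$ and to $1$ on $[z,\infty)$, with $\eps_n \downarrow 0$ and $z-\eps_n > a$, so that $\mathrm{supp}(\phi_n') \subset [z-\eps_n,z] \subset A$ and $\phi_n'$ is a non-negative smooth approximation of the Dirac mass at $z$. The smoothness of $p$ established in the previous lemma gives
\[
\E[\phi_n'(M_T)] = \int_{\R} \phi_n'(y)\,p(y)\,dy \longrightarrow p(z),
\]
while $|\phi_n|\leq 1$ together with dominated convergence give $\E[\phi_n(M_T)\,\delta(u_A/\gamma_A)] \to \E[\1_{\{M_T > z\}}\,\delta(u_A/\gamma_A)]$, using $\P(M_T = z) = 0$ which follows from the continuity of $p$. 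Passing to the limit in the identity of the first step proves the proposition.

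The main obstacle I anticipate is the clean verification that $u_A/\gamma_A$ belongs to $\Dom(\delta)$ with the required integrability. Since $\gamma_A$ is not uniformly bounded away from zero, the division amplifies any bad behaviour of $Y$ near the origin, so one must show that $\gamma_A^{-1}$ has moments of all orders. This is precisely the ingredient that also underlies the use of the local smoothness criterion in the previous lemma, and should be obtained from quantitative lower tail estimates on $Y_r$ for $r$ close to $T$ combined with the explicit form of the cutoff $\psi$.
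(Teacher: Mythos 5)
Your argument follows essentially the same route as the paper: exploit the duality between $D$ and $\delta$ together with the local identity $\langle DM_T,u_A\rangle=\gamma_A$ on $\{M_T\in A\}$, localize via test functions whose derivative is supported in $A$ (the paper uses a fixed cutoff $G=\kappa(M_T)$ and arbitrary $f\in\mathcal{C}^\infty_0$ instead of your sequence $\phi_n$, but this is the same device), and identify the density in the limit. Your explicit attention to $u_A/\gamma_A\in\Dom(\delta)$ via Proposition \ref{prop: integral of product} and negative moments of $\gamma_A$ anticipates exactly what the paper supplies afterwards in Lemma \ref{aux 3}.

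One sub-step is wrong as written: your justification that $\gamma_A>0$. You argue that on $\{M_T>a\}$ one has $Y_r\ge R_T(a)$ for $r\in[r^*,T]$ and hence $\gamma_A\ge T-r^*$. But $\psi=\psi_0(\cdot/R_T)$ is a \emph{decreasing} cutoff, equal to $1$ for arguments below $R_T/2$ and to $0$ for arguments above $R_T$; on the set where $Y_r\ge R_T(a)$ the integrand $\psi(Y_r)$ vanishes, so that interval contributes nothing to $\gamma_A=\int_0^T\psi(Y_r)\,dr$. The fact you need is nevertheless true, and for the right reason it holds unconditionally (not only on $\{M_T>a\}$): $Y_0=0$ and $Y$ has continuous, non-decreasing paths, so $\psi(Y_r)=1$ on a non-degenerate interval $[0,r_0)$ almost surely, whence $\gamma_A\ge r_0>0$ a.s.; the quantitative version (negative moments of all orders) is Lemma \ref{aux 3}. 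With that correction the division by $\gamma_A$, the duality step, and the passage to the limit using the continuity of $p$ on $A$ and $\P(M_T=z)=0$ all go through, and the proof is complete.
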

\begin{proof}
    Let $\kappa : \R \to [0,1]$ be an infinitely differentiable function such that
    \[
    \kappa(r) = \begin{cases}
        0 & r\leq \frac{x+z}{2}, \\
        1 & r \geq \frac{x+3z}{4}.
    \end{cases}
    \]We define also the random variable $G = \kappa(M_T)$. Observe that $G = 0$ on the set $\{M_T \notin A\}$. Let $f \in \mathcal{C}^{\infty}_0(\R)$ and let $\phy(x) = \int_{-\infty}^x f(y) dy$. On the set $\{M_T \in A\}$ we have
    \[
    \langle D\phy(M_T), u_A\rangle = \phy'(M_T) \langle DM_T, u_A\rangle = \phy'(M_T) \gamma_A
    \]
    and therefore
    \[
    G\phy'(M_T) = \langle D\phy(M_T), \frac{Gu_A}{\gamma_A}\rangle
    \]
    Therefore, if we take expectations on both sides and we use the duality relationship between the Malliavin derivative and the divergence operator we have
    \[
    \E[G\phy'(M_T)] = \E\left[\phy(M_T) \delta \left( \frac{Gu_A}{\gamma_A}\right) \right].
    \]
    Hence,
    \[
    \E[G \phy'(M_T)] = \int_{\R} f(y) \E\left[ \1_{ \{M_T > y\} } \delta \left( \frac{Gu_A}{\gamma_A}\right)\right] dy.
    \]
    Finally, since $\phy' = f$ we find that for every $y \in (\frac{x+3z}{4}, \infty)$ it is satisfied that $G = 1$ and therefore the density at the point $z \in (\frac{x+3z}{4}, \infty)$ is given by
    \[
    p(z) = \E\left[ \1_{ \{M_T > z\} } \delta \left( \frac{u_A}{\gamma_A}\right)\right].
    \]
\end{proof}
The Cauchy-Schwartz inequality provides a direct upper bound for $p(z)$. Indeed,
\[
p(z) \leq \P (M_T \geq z)^{1/2} ||\delta(u_A \gamma_A^{-1})||_{L^2(\Omega)}.
\]
The tail estimate obtained in the previous section allows us to deal with the $\P (M_T \geq z)^{1/2}$ term, so the analysis of the density of $M_T$ is focused on the analysis of the term $||\delta(u_A \gamma_A^{-1})||_{L^2(\Omega)}$. Let's break down the strategy to analyse this latter term. Proposition \ref{prop: integral of product} allows us to write
\[
\delta\left( \frac{u_A}{\gamma_A}\right) = \frac{\delta(u_A)}{\gamma_A} + \frac{\langle D\gamma_A, u_A\rangle}{\gamma_A^2} = I_1 + I_2,
\]
so
\[
\left| \left|\delta\left( \frac{u_A}{\gamma_A}\right) \right| \right|_{L^2(\Omega)} \leq ||I_1||_{L^2(\Omega)} + ||I_2||_{L^2(\Omega)}.
\]
The following sequence of lemmas provide an analysis of the moments of the terms involved.
\begin{lema} There exists a constant $C > 0$ depending on $\psi$, $p$, $\sigma$ and $\rho$ such that
    \[
    ||\delta(u_A)||_{L^p(\Omega)} \leq C \sqrt{T}
    \]
\end{lema}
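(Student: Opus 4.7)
The plan is to bound $\|\delta(u_A)\|_{L^p(\Omega)}$ via the standard Meyer/$L^p$ estimate for the Skorohod integral,
\[
\|\delta(u_A)\|_{L^p(\Omega)} \leq C_p\bigl(\|u_A\|_{L^p(\Omega;\H)} + \|Du_A\|_{L^p(\Omega;\H\otimes\H)}\bigr),
\]
and then to show that each of the two right-hand terms is $O(T^{1/2})$ as $T\to 0$. Both $u_A$ and its Malliavin derivative are tractable because $\sigma$ is independent of the Brownian motion $B$ with respect to which we differentiate, and because $\psi$ is a smooth cut-off with derivative controlled by $R_T^{-1}$.

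For the first term I would use that $0\leq \psi\leq 1$ (so $|\psi(Y_t)|\leq 1$) and that, by Hypothesis \ref{hyp 1}, $\sigma_t\geq \alpha$ almost surely. Then
\[
\|u_A\|_{\H}^2 = \int_0^T \frac{\psi(Y_t)^2}{\sigma_t^2(1-\rho^2)}\,dt \leq \frac{T}{\alpha^2(1-\rho^2)},
\]
so $\|u_A\|_{L^p(\Omega;\H)} \leq C\sqrt{T}$ for a deterministic constant $C$.

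For the derivative term I would first note that $D_s \sigma_t = 0$ for all $s,t$ (because $\sigma$ is $\mathbb{F}^W$-adapted and $B\perp W$), so the chain rule gives
\[
D_s u_A(t) = \frac{\psi'(Y_t)}{\sigma_t\sqrt{1-\rho^2}}\, D_s Y_t.
\]
Using $|\psi'(Y_t)| \leq c\,R_T^{-1}$ and $\sigma_t\geq \alpha$, I get
\[
\|Du_A\|_{\H\otimes\H}^2 \leq \frac{c^2\,R_T^{-2}}{\alpha^2(1-\rho^2)} \int_0^T \|DY_t\|_{\H}^2\,dt,
\]
and for $p\geq 2$, Jensen's inequality turns this into
\[
\E\bigl[\|Du_A\|_{\H\otimes\H}^p\bigr] \leq C\,R_T^{-p}\,T^{(p-2)/2}\int_0^T \E\bigl[\|DY_t\|_{\H}^p\bigr]\,dt.
\]
The uniform-in-$r$ estimate of the preceding lemma bounds the last integral by $CT^{p(p_0-\gamma_0+5/2)+1}$, while $R_T^{-1} = C(a-x)^{-2p_0}T^{(\gamma_0-4)/2}$. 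Collecting powers of $T$ yields
\[
\|Du_A\|_{L^p(\Omega;\H\otimes\H)} \leq C\,T^{p_0 - \gamma_0/2 + 1},
\]
and the constraint $p_0 - 2 > \gamma_0 > 4$ from the definition of $Y$ makes this exponent much larger than $1/2$, so the term is absorbed into $C\sqrt{T}$ for $T\leq 1$. Combining the two bounds gives $\|\delta(u_A)\|_{L^p(\Omega)} \leq C\sqrt{T}$.

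The only real subtlety I anticipate is the exponent bookkeeping in the derivative estimate: one must carry the explicit dependence on $R_T$ through and then verify that the condition $p_0 - \gamma_0 > 4$ chosen when defining $Y$ is exactly what is needed to make every power of $T$ nonnegative. Everything else is a routine application of independence of $\sigma$ and $B$, the boundedness of $\psi$ and $\psi'$, and the $L^p$ bounds on $DY_r$ established earlier.
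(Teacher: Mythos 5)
Your argument is correct, but it takes a genuinely different route from the paper. The paper simply observes that $u_A$ is adapted to the filtration $\mathbb{F}$ (since $Y_r$ and $\sigma_r$ are), so the Skorohod integral coincides with the It\^o integral $\int_0^T \psi(Y_r)\,\sigma_r^{-1}(1-\rho^2)^{-1/2}\,dB_r$, and a single application of Burkholder's inequality together with $0\le\psi\le 1$ and $\sigma_r\ge\alpha$ gives $\E[|\delta(u_A)|^p]\le CT^{p/2}$ directly. You instead invoke the Meyer estimate $\|\delta(u_A)\|_{L^p}\le C_p(\|u_A\|_{L^p(\Omega;\H)}+\|Du_A\|_{L^p(\Omega;\H\otimes\H)})$, which forces you to also control $Du_A$; your bookkeeping there is right (the exponent $p_0-\gamma_0/2+1$ is indeed well above $1/2$ under $p_0-2>\gamma_0>4$ — note the standing condition is $p_0-\gamma_0>2$, not $>4$ as you write at the end, but this only makes your margin larger). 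The trade-offs: the paper's route is shorter, needs no information about $\psi'$ or $DY$, and yields a constant independent of $R_T$ and hence of $z$ and $T$; your route is heavier and your derivative term carries the factor $R_T^{-1}\sim (a-x)^{-2p_0}T^{(\gamma_0-4)/2}$, so your constant depends on $a-x=(z-x)/2$ — harmless in the paper's applications (where $z\ge b>x$, or at worst $z\ge x+T^{1/4}$, and the surplus powers of $T$ absorb it), but worth flagging since the lemma asserts a constant depending only on $\psi$, $p$, $\sigma$ and $\rho$. Your approach does have the merit of not relying on adaptedness, so it would survive a modification in which $u_A$ were anticipating.
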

\begin{proof}
    It is clear that 
    \[
    \delta(u_A) = \int_0^T \frac{\psi(Y_r)}{\sigma_r \sqrt{1-\rho^2}} dB_r
    \]
    so Burkholder's inequality, the definition of $\psi$ and the hypotheses on $\sigma$ show that
    \begin{align*}
    \E[|\delta(u_A)|^p] \leq & c_p\E \left[ \left( \int_0^T \frac{\psi(Y_r)^2}{\sigma_r^2(1-\rho^2)} dr\right)^{p/2}\right] \\
    \leq & C \E\left[ \left( \int_0^T \psi(Y_r)^2 dr\right)^{p/2}\right] \\
    \leq & C T^{p/2}
    \end{align*}
    so 
    \[
    ||\delta(u_A)||_{L^p(\Omega)} \leq C\sqrt{T}.
    \]
\end{proof}

\begin{lema} \label{aux 1} There exists a constant $C > 0$ depending on $\psi$, $\rho$ and $\sigma$ such that
    \[
    ||u_A||_{\H} \leq C \sqrt{T}.
    \]
\end{lema}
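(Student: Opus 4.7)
The claim is an almost immediate consequence of the definition of $u_A$ together with the standing assumptions on the volatility and the cut-off $\psi$. My plan is essentially a one-step pathwise estimate, carried out as follows.

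First, I would unpack the $\H$-norm directly from the definition $u_A = \psi(Y_\cdot)/(\sigma_\cdot \sqrt{1-\rho^2})$, writing
\[
\|u_A\|_{\H}^2 = \int_0^T \frac{\psi(Y_r)^2}{\sigma_r^2 (1-\rho^2)}\,dr.
\]
Next, I would invoke the two pointwise bounds that are already available: by the construction of $\psi = \psi_0(\cdot/R_T)$ with $\psi_0$ valued in $[0,1]$, one has $\psi(Y_r)^2 \leq 1$ for every $r$ and every $\omega$; and Hypothesis \ref{hyp 1} provides the uniform lower bound $\sigma_r \geq \alpha$ almost surely. Substituting these into the integrand yields
\[
\|u_A\|_{\H}^2 \leq \frac{1}{\alpha^2(1-\rho^2)} \int_0^T dr = \frac{T}{\alpha^2(1-\rho^2)}.
\]

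Taking square roots and absorbing the numerical constants (which depend only on $\alpha$ and $\rho$, and implicitly on $\psi$ through the bound $\|\psi\|_\infty \leq 1$) gives the desired inequality $\|u_A\|_{\H} \leq C\sqrt{T}$. There is no genuine obstacle here since the bound is purely pathwise; the only subtle point worth flagging is that $C$ must be independent of $T$, which is indeed the case because the cutoff radius $R_T$ enters only through the normalization inside $\psi_0$, not through the uniform supremum bound $\|\psi_0\|_\infty \leq 1$ used above. This independence is exactly what makes the estimate useful later when combined with the $T \to 0$ asymptotics.
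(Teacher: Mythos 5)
Your proof is correct and follows exactly the paper's own argument: unpack $\|u_A\|_{\H}^2$ from the definition, then bound $\psi \leq 1$ and $\sigma_r \geq \alpha$ pointwise. The paper merely compresses this by referring to ``the same estimations as in the previous lemma,'' which you have simply written out in full.
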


\begin{proof}
    Applying the definition of $u_A$ we see that
    \[
    ||u_A||_{\H}^2 = \int_0^T \frac{\psi(Y_r)^2}{\sigma_r^2 \sqrt{1-\rho^2}} dr
    \]
    so the conclusion follows using the same estimations as in the previous lemma.
\end{proof}
\begin{lema} \label{aux 2} There exists a constant $C > 0$ depending on $\psi$, $\rho$ and $\sigma$ such that
    \[
    ||\langle D\gamma_A , u_A \rangle_{\H} ||_{L^p(\Omega)} \leq CT^4
    \]
    for some constant $C > 0$ not depending on $T$.
\end{lema}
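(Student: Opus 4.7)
The plan is to expand the inner product via the chain rule. Writing $\gamma_A=\int_0^T \psi(Y_r)\,dr$ and differentiating gives $D_s \gamma_A = \int_0^T \psi'(Y_r)\,D_s Y_r\,dr$, so that
\[
\langle D\gamma_A, u_A\rangle_{\H} \;=\; \int_0^T \psi'(Y_r)\,\langle DY_r, u_A\rangle_{\H}\,dr.
\]
Two structural estimates then take over. First, since $\psi(x)=\psi_0(x/R_T)$ with $\psi_0$ smooth, one has the pointwise bound $|\psi'(Y_r)|\leq c/R_T$. Second, inspecting the proof of Lemma \ref{aux 1} one sees that the inequality $\|u_A\|_{\H}\leq C\sqrt{T}$ actually holds \emph{almost surely}, because $\psi\in[0,1]$ and $\sigma\geq\alpha>0$ by Hypothesis \ref{hyp 1}. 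Applying Cauchy--Schwarz in $\H$ pointwise in $\omega$ therefore yields
\[
|\langle D\gamma_A, u_A\rangle_{\H}| \;\leq\; \frac{c\sqrt{T}}{R_T}\int_0^T \|DY_r\|_{\H}\,dr.
\]

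To pass to $L^p(\Omega)$, I would apply Minkowski's inequality for integrals and use the moment bound furnished by the preceding lemma, namely $\bigl\|\,\|DY_r\|_{\H}\,\bigr\|_{L^p(\Omega)}\leq c_p\, r^{p_0-\gamma_0+5/2}$, to obtain
\[
\|\langle D\gamma_A, u_A\rangle_{\H}\|_{L^p(\Omega)} \;\lesssim\; \frac{\sqrt{T}}{R_T}\,T^{p_0-\gamma_0+7/2}.
\]
Substituting the explicit form $R_T = C(a-x)^{2p_0}\,T^{(4-\gamma_0)/2}$ — so that $1/R_T$ contributes a factor $T^{(\gamma_0-4)/2}$ at the cost of absorbing $(a-x)^{-2p_0}$ into the constant — the total exponent of $T$ becomes
\[
\tfrac{1}{2} \,+\, \tfrac{\gamma_0-4}{2} \,+\, p_0 - \gamma_0 + \tfrac{7}{2} \;=\; p_0-\tfrac{\gamma_0}{2}+2.
\]
Under the standing hypothesis $p_0-2>\gamma_0>4$, this exponent exceeds $6$, so since $T\leq 1$ we may freely downgrade the bound to $T^4$, proving the claim.

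The main obstacle is the bookkeeping of the exponents: the cut-off factor $1/R_T$ blows up as $T\to 0$, and the argument only succeeds because this divergence is dwarfed by the positive powers of $T$ gained from the integration in $r$ (which produces $T^{p_0-\gamma_0+7/2}$, a large power thanks to $p_0>\gamma_0+2$) and from the almost-sure control of $\|u_A\|_{\H}$. A subtler point, easy to miss, is that $\|u_A\|_{\H}$ must be controlled \emph{almost surely} rather than merely in $L^p$, since it is this pathwise bound that legitimises the Cauchy--Schwarz step \emph{before} taking $L^p$ norms; otherwise one would waste a Hölder step and lose room in the exponent.
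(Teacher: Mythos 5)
Your argument is correct and follows essentially the same route as the paper: expand $\langle D\gamma_A,u_A\rangle_{\H}=\int_0^T\psi'(Y_r)\langle DY_r,u_A\rangle_{\H}\,dr$, use $|\psi'|\le c/R_T$ together with the almost-sure bound $\|u_A\|_{\H}\le C\sqrt{T}$ and the moment estimate on $\|DY_r\|_{\H}$, and balance the $T^{(\gamma_0-4)/2}$ loss from $1/R_T$ against the gain from integrating in $r$, arriving at the same exponent $p_0-\gamma_0/2+2>4$. The only cosmetic difference is that you pass to $L^p(\Omega)$ via Minkowski's integral inequality where the paper uses Hölder in $r$; both yield the identical power of $T$.
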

\begin{proof}
    Since $D_t \gamma_A = \int_0^T \psi'(Y_r) D_u Y_r dr$ we have that
    \[
    \langle D\gamma_A, u_A \rangle = \int_0^T \int_0^T \psi'(Y_r) D_t Y_r  u_A(t) dr dt = \int_0^T \psi'(Y_r) \langle DY_r, u_A\rangle dr.
    \]
    Now, Hölder's inequality for $p \geq 1$ shows that
    \begin{align*}
    \E[|\langle D\gamma_A, u_A\rangle|^p] \leq &T^{p-1} ||\psi'||_{\infty}^p\int_0^T \E[|\langle DY_r, u_A\rangle|^p] dr \\
    \leq & C T^{p-1} R_T(a)^{-p} \int_0^T \E[||DY_r||_{\H}^p ||u_A||_{\H}^p] dr \\
    \leq & C T^{3p/2-1}R_T(a)^{-p} \int_0^T \E[||DY_r||_{\H}^p] dr \\
    \leq & C R_T(a)^{-p} T^{3p/2-1} \int_0^T  r^{p(p_0 - \gamma_0 + 5/2)} dr \\
    \leq & C R_T(a)^{-p} T^{4p+pp_0 - p\gamma_0 -2p+\frac{p\gamma_0}{2}} \\
    \leq & C T^{p(2+p_0-\frac{\gamma_0}{2})} \\
    \leq & C T^{4p}
    \end{align*}
    where we have applied the bound for $\E[||DY_r||_{\H}^p]$, the definition of $R_T(a)$ and the fact that $p_0 > 6$.
\end{proof}
\begin{lema} \label{aux 3}
    The random variable $\gamma_A$ has negative moments of all orders- Furthermore, if we assume $z \geq x + T^{1/4}$ then there exists a constant $C > 0$ not depending on $T$ such that
    \[
    ||\gamma_A^{-1}||_{L^p(\Omega)} \leq \frac{C}{T}
    \]
\end{lema}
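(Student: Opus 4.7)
The plan is to reduce the bound on $\E[\gamma_A^{-p}]$ to controlling the negative moments of a monotone first-passage time for $Y$. From the definition of $\psi$ (a smooth cutoff equal to $1$ on $\{Y_r \leq R_T/2\}$) one has the pointwise lower bound $\psi(Y_r) \geq \1_{\{Y_r \leq R_T/2\}}$, so
\[
\gamma_A \;=\; \int_0^T \psi(Y_r)\, dr \;\geq\; \int_0^T \1_{\{Y_r \leq R_T/2\}}\, dr \;=\; \tau_*,
\]
where $\tau_* := \inf\{r \in [0,T] : Y_r > R_T/2\}$ with the convention $\inf\emptyset = T$; the last equality uses that $r \mapsto Y_r$ is non-decreasing and continuous with $Y_0 = 0 < R_T/2$. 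Consequently $\gamma_A^{-p} \leq \tau_*^{-p}$ almost surely and it suffices to bound the negative moments of $\tau_*$.

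By monotonicity of $Y$, for every $\eps \in (0,T]$ we have $\{\tau_* < \eps\} \subseteq \{Y_\eps \geq R_T/2\}$. Combining this with the moment bound $\E[Y_\eps^q] \leq C\,\eps^{2q} T^{(p_0-\gamma_0)q}$ established earlier in the section and Markov's inequality gives, for any integer $q \geq 1$,
\[
\P(\tau_* < \eps) \;\leq\; \frac{\E[Y_\eps^q]}{(R_T/2)^q} \;\leq\; C\, R_T^{-q}\, \eps^{2q}\, T^{(p_0-\gamma_0)q}.
\]
Using the layer-cake identity $\E[\tau_*^{-p}] = p \int_0^T u^{-p-1}\P(\tau_* < u)\, du + T^{-p}$ together with the choice of any $q > p/2$ (so that $\int_0^T u^{2q-p-1}du$ converges), this yields
\[
\E[\tau_*^{-p}] \;\leq\; T^{-p} \;+\; C\, R_T^{-q}\, T^{(p_0-\gamma_0)q + 2q - p}.
\]

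The final step is to insert the lower bound on $R_T$. Under the hypothesis $z \geq x + T^{1/4}$ one has $a-x = (z-x)/2 \geq T^{1/4}/2$, so $R_T = C(a-x)^{2p_0}T^{(4-\gamma_0)/2} \geq c\, T^{(p_0+4-\gamma_0)/2}$. Substituting, the exponent of $T$ in the second term above collapses to $q(p_0-\gamma_0)/2 - p$, which is $\geq -p$ since $p_0 > \gamma_0$; hence $\E[\gamma_A^{-p}] \leq C T^{-p}$ and taking $p$-th roots gives the announced bound $\|\gamma_A^{-1}\|_{L^p(\Omega)} \leq C/T$. For the unconditional first assertion (negative moments of all orders for arbitrary $z > x$), only $R_T \geq c\, T^{(4-\gamma_0)/2}$ is available, but the same scheme works by taking $q$ sufficiently large; the parameter constraint $p_0 - 2 > \gamma_0 > 4$ is exactly what guarantees that the resulting polynomial exponent of $T$ stays finite. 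I expect the main obstacle to be the careful bookkeeping of the powers of $T$ originating from $R_T$, the moment bound on $Y_\eps$ and the integration in $u$; the reduction $\gamma_A \geq \tau_*$ is itself simple but is the crucial observation, and it bypasses the need for any further Malliavin calculus machinery.
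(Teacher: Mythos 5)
Your proof is correct and follows essentially the same route as the paper: both lower-bound $\gamma_A$ by the occupation time $\int_0^T \1_{\{Y_r \le R_T/2\}}\,dr$ (which you additionally identify, via the monotonicity of $Y$, as the first-passage time $\tau_*$), bound its small-ball probability through $\P(Y_\eps \ge R_T/2)$ and Markov's inequality, and conclude by a layer-cake computation combined with the lower bound $R_T \ge c\,T^{(p_0+4-\gamma_0)/2}$ coming from $z \ge x + T^{1/4}$. Your exponent bookkeeping reproduces the paper's $\frac{q(p_0-\gamma_0)}{2}-p$ exactly (and correctly uses the positive power $(a-x)^{2p_0}$ in $R_T$, where the paper's proof contains a sign typo).
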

\begin{proof}
    Since
    \[
    \gamma_A = \int_0^T \psi(Y_r) dr \geq \int_0^T \1_{\{ Y_r \le \frac{R_T(a)}{2}\}} dr =: \Gamma_A.
    \]
    Let $0<\eps < T$. We find that
    \[
    \P(\Gamma_A < \eps) \leq \P(Y_{\eps} \geq R_T(a)/2)\leq \frac{2^q \E[Y_{\eps}^q]}{R_T(a)^q} \leq C_q \eps^{2q} R_T(a)^{-q}  T^{(p_0 - \gamma_0)q}.
    \]
    This allows us to conclude, thanks to \cite{dalang2009minicourse} (Chapter 3, Lemma 4.4), that $\Gamma_A$ has negative moments of all orders and therefore so does $\gamma_A$. We can make a further analysis to show that for every $2q > p$ we have
    \begin{align*}
        \E[\Gamma_A^{-p}] = &p \int_0^{\infty} y^{p-1} \P(\Gamma_A^{-1} > y) dy \\
        = & p\int_0^{T^{-1}} y^{p-1} \P(\Gamma_A^{-1} > y) dy + p\int_{T^{-1}}^{\infty} y^{p-1} \P(\Gamma_A^{-1} > y) dy \\
        \leq & CT^{-p} + CR_{T}(a)^{-q}T^{(p_0 - \gamma_0)q}\int_{T^{-1}}^{\infty} y^{p-2q-1} dy. \\
    \end{align*}
    Since
    \[
    R_T(a) = C (a-x)^{-2p_0}T^{\frac{4-\gamma_0}{2}}
    \]
    and $a-x > \frac{T^{1/4}}{2}$ because $z > x+ T^{1/4}$, then
    \[
    R_T(a)^{-q} \leq T^{-\frac{qp_0}{2}}T^{\frac{-(4-\gamma_0)q}{2}}.
    \]
    This implies that
    \[
     R_{T}(a)^{-q}T^{(p_0 - \gamma_0)q}\int_{T^{-1}}^{\infty} y^{p-2q-1} dy \leq T^{-\frac{qp_0}{2}}T^{\frac{-(4-\gamma_0)q}{2}}T^{(p_0 - \gamma_0)q}T^{2q-p}.
    \]
    We gather together the exponents as
    \begin{align*}
        -\frac{qp_0}{2}-2q+\frac{\gamma_0q}{2} + qp_0 -\gamma_0 q + 2q -p = & \frac{qp_0}{2} - \frac{q\gamma_0}{2} -p
    \end{align*}
    to obtain that
    \[
    CT^{-p} + CR_{T}(a)^{-q}T^{(p_0 - \gamma_0)q}\int_{T^{-1}}^{\infty} y^{p-2q-1} dy \leq CT^{-p}(1+T^{\frac{p_0 - \gamma_0}{2}}).
    \]
    Finally, using the relationships $p_0 - 2 > \gamma_0 > 4$ we conclude that the exponent $\frac{p_0 - \gamma_0}{2}$ is positive. Consequently, there exists $C > 0$ such that
    \[
     \E[\Gamma_A^{-p}] \leq CT^{-p}.
    \]
    since $\gamma_A \geq \Gamma_A$ we conclude the result.
\end{proof}
We are now ready to prove the upper bound for the density of $M_T$.
\begin{teo} \label{th: bound density}
    There exists $T_0 > 0$ such that for all $T \leq T_0$, the density function $p(z)$ of the random variable $M_T$ satisfies the following upper bound:
    \[
    p(z) \leq2C \frac{1}{\sqrt{T}}\exp\left( -\frac{(z-x)^2}{2C^2 T}\right)
    \]
    for a constant $C > 0$ not depending on $T$.
\end{teo}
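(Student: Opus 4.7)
The plan is to combine the density representation
\[
p(z) = \E\!\left[\1_{\{M_T > z\}} \delta\!\left(\frac{u_A}{\gamma_A}\right)\right]
\]
proved just above with the exponential tail bound from Section \ref{sec 6} by means of a Cauchy--Schwarz splitting. Specifically, I would write
\[
p(z) \;\leq\; \P(M_T \geq z)^{1/2}\; \left\|\delta\!\left(\tfrac{u_A}{\gamma_A}\right)\right\|_{L^2(\Omega)},
\]
and then use the tail estimate of Section \ref{sec 6} to control the first factor by $\exp(-(z-x)^2/(2C^2 T))$, which already provides the Gaussian shape in the statement. The remaining work is to show that the second factor is at most $2C/\sqrt{T}$ for $T$ small enough, which will produce the prefactor in the theorem.

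To control the $L^2$ norm of the Skorohod integral, I would apply Proposition \ref{prop: integral of product} to the product $u_A \cdot \gamma_A^{-1}$ (noting that $\gamma_A^{-1}$ lives in $\D^{1,2}$ by Lemma \ref{aux 3} and the chain rule), obtaining the decomposition $\delta(u_A/\gamma_A) = I_1 + I_2$ with
\[
I_1 = \frac{\delta(u_A)}{\gamma_A}, \qquad I_2 = \frac{\langle D\gamma_A, u_A\rangle}{\gamma_A^2},
\]
as already noted in the text. For $I_1$ I would apply Hölder's inequality with exponents $(4,4)$, combining the bound $\|\delta(u_A)\|_{L^4(\Omega)} \leq C\sqrt{T}$ from the first Lemma in this block with the negative moment bound $\|\gamma_A^{-1}\|_{L^4(\Omega)} \leq C/T$ from Lemma \ref{aux 3}, yielding $\|I_1\|_{L^2(\Omega)} \leq C/\sqrt{T}$. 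For $I_2$ I would proceed similarly, using Lemma \ref{aux 2} to get $\|\langle D\gamma_A, u_A\rangle\|_{L^4(\Omega)} \leq CT^4$ and $\|\gamma_A^{-2}\|_{L^4(\Omega)} \leq C/T^2$ from Lemma \ref{aux 3}, so $\|I_2\|_{L^2(\Omega)} \leq CT^2$. For $T \leq 1$ the dominant term is $I_1$, giving
\[
\left\|\delta\!\left(\tfrac{u_A}{\gamma_A}\right)\right\|_{L^2(\Omega)} \leq \frac{C}{\sqrt{T}} + CT^2 \leq \frac{2C}{\sqrt{T}}
\]
for $T$ below some threshold $T_0$.

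The main conceptual obstacle, and what determines $T_0$, is that Lemma \ref{aux 3} only supplies the estimate $\|\gamma_A^{-1}\|_{L^p(\Omega)} \leq C/T$ under the condition $z \geq x + T^{1/4}$. I would therefore observe that the intended range of application of the density bound is where $z-x$ is bounded below by a positive quantity (in the downstream application we take $z = b > x$), so shrinking $T_0$ to ensure $T_0^{1/4} \leq (z-x)$ (or, for the integrated application, $(b-x)$) makes the negative-moment bound applicable. After these cross-checks, assembling the two factors by the Cauchy--Schwarz inequality and renaming the constants yields
\[
p(z) \leq 2C\,\frac{1}{\sqrt{T}}\exp\!\left(-\frac{(z-x)^2}{2C^2 T}\right),
\]
as required. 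The rest is bookkeeping of constants; no new Malliavin machinery is needed beyond what the preceding lemmas already provide.
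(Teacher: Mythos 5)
Your proof follows essentially the same route as the paper: the Cauchy--Schwarz splitting of the density representation, the decomposition $\delta(u_A/\gamma_A)=I_1+I_2$ via Proposition \ref{prop: integral of product}, Hölder on each term using Lemmas \ref{aux 1}--\ref{aux 3}, and the Section \ref{sec 6} tail estimate for the Gaussian factor. Your explicit handling of the condition $z \geq x + T^{1/4}$ from Lemma \ref{aux 3} is a point the paper's own proof passes over silently, so that is a welcome extra care rather than a deviation.
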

\begin{proof}
    As it was shown,
    \[
    p_0(z) \leq \P(M_T \geq z)^{1/2} (||I_1||_{L^2(\Omega)} + ||I_2||_{L^2(\Omega)}).
    \]
    Now, using Hölder's inequality we see that
    \[
    ||I_1||_{L^2(\Omega)} \leq C T^{1/2} T^{-1} = C T^{-1/2}, \quad ||I_2||_{L^2(\Omega)} \leq C T^{4} T^{-2} \leq CT^2.
    \]
    Hence, plugging all the estimates obtained in Lemmas \ref{aux 1}, \ref{aux 2} and \ref{aux 3} we are able to show that there exist $c_1, c_2 > 0$ such that
    \[
    p(z) \leq c_1\left(\frac{1}{\sqrt{T}} + T^2 \right) \exp\left( -\frac{(z-m_T)^2}{2c_2^2 T}\right) \leq 2c_1 \frac{1}{\sqrt{T}}\exp\left( -\frac{(z-m_T)^2}{2c_2^2 T}\right).
    \]
    Using again that $m_T \to x$ as $T \to 0$ and renaming the constants we obtain the claimed statement.
\end{proof}
This result is the key to understand the asymptotic behaviour of the Call price $C_0^b$.
\begin{teo}
    Let $C_0^b$ be an up-and-in barrier call option with log-barrier $b > x = \log S_0$. Then, the probability of reaching the barrier satisfies the upper bound
    \[
    \P(M_T \geq b) \leq \int_{b}^{\infty}\frac{c_1}{\sqrt{T}} \exp\left( -\frac{(z-x)^2}{2c_2 T}\right) dz
    \]
    and, in particular, $C_0^b$ approaches zero faster than any polynomial of $T$.
\end{teo}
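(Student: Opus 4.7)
The strategy is to combine the pointwise density estimate of Theorem \ref{th: bound density} with the representation $\P(M_T \geq b) = \int_b^\infty p(z)\,dz$, which is valid because the previous section established that $M_T$ admits a smooth density on $(x,\infty)$. Since $b > x$ is fixed, for every sufficiently small $T$ we have $b \geq x + T^{1/4}$, so the hypothesis $z \geq x + T^{1/4}$ of Lemma \ref{aux 3} used in the proof of Theorem \ref{th: bound density} is satisfied uniformly in $z \in [b,\infty)$. Thus the bound
\[
p(z) \leq \frac{c_1}{\sqrt{T}} \exp\!\left(-\frac{(z-x)^2}{2c_2 T}\right)
\]
holds on the whole half-line of integration, and integrating it yields exactly the desired estimate on $\P(M_T \geq b)$.

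To extract the super-polynomial decay, I would perform the change of variables $u = (z-x)/\sqrt{2c_2 T}$ to rewrite the right-hand side as a rescaled Gaussian tail,
\[
\int_b^\infty \frac{c_1}{\sqrt{T}} \exp\!\left(-\frac{(z-x)^2}{2c_2 T}\right) dz = c_1\sqrt{2c_2} \int_{(b-x)/\sqrt{2c_2 T}}^{\infty} e^{-u^2}\,du.
\]
Since $(b-x)/\sqrt{2c_2 T}\to \infty$ as $T\to 0$, the standard Mills-ratio bound $\int_M^\infty e^{-u^2}du \leq e^{-M^2}/(2M)$ gives $\P(M_T\geq b)\leq C\sqrt{T}\exp(-(b-x)^2/(2c_2 T))$, which vanishes faster than any polynomial in $T$.

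The passage from $\P(M_T\geq b)$ to $C_0^b$ uses the Hölder inequality already displayed in the introduction,
\[
C_0^b \leq \|(S_T-K)_+\|_{L^p(\Omega)} \P(M_T\geq b)^{1/q}, \qquad \frac{1}{p}+\frac{1}{q}=1.
\]
Under Hypothesis \ref{hyp 1}, $\sigma$ is bounded, so an application of the Burkholder--Davis--Gundy inequality to $X_T$ (together with $T\leq 1$) shows that $\|(S_T-K)_+\|_{L^p(\Omega)}$ is bounded uniformly in $T\in(0,T_0]$. The super-polynomial decay of $\P(M_T\geq b)^{1/q}$ therefore transfers to $C_0^b$, giving $C_0^b = o(T^\alpha)$ for every $\alpha>0$.

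The step I expect to require the most care is the first one: Theorem \ref{th: bound density} is proved only under the restriction $z \geq x + T^{1/4}$, so one must verify that this restriction is compatible with integrating over $[b,\infty)$ as $T\to 0$. Because $b$ is a fixed constant strictly greater than $x$ and $T^{1/4}\to 0$, this amounts to shrinking the earlier $T_0$ if necessary, but it is important to make this explicit so that the constants $c_1,c_2$ in the final bound are genuinely independent of $T$.
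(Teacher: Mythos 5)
Your proposal is correct and follows essentially the same route as the paper: integrate the density bound of Theorem \ref{th: bound density} over $[b,\infty)$, observe that the resulting Gaussian tail decays faster than any power of $T$, and transfer this to $C_0^b$ via the Hölder inequality from the introduction. In fact your write-up is more careful than the paper's (which simply says the bound ``comes directly'' from the density estimate), since you explicitly check that the restriction $z \geq x + T^{1/4}$ from Lemma \ref{aux 3} is satisfied uniformly on $[b,\infty)$ once $T$ is small enough, and that $\|(S_T-K)_+\|_{L^p(\Omega)}$ stays bounded as $T\to 0$.
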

\begin{proof}
    The fact that
    \[
    \P(M_T \geq b) \leq \int_{b}^{\infty}\frac{c_1}{\sqrt{T}} \exp\left( -\frac{(z-x)^2}{2c_2 T}\right) dz
    \]
    comes directly from Theorem \ref{th: bound density}. As a direct consequence we find that, for every $\alpha > 0$,
    \[
    \lim_{T \to 0} \frac{\P(M_T \geq b)}{ T^{\alpha}} = 0.
    \]
    The result now follows from applying Hölder's inequality in $C_0^{b}$.
\end{proof}
\section{Application To The Rough Bergomi Model} \label{sec 8}
One of the main drawbacks of the main result we have given is that we are assuming that there exist two constants $\alpha, \beta$ such that $\alpha < \sigma_t < \beta$ and we have heavily relied on this hypothesis in order to derive the upper bound for $\P(\sup_{t \in [0,T]} S_t \geq B)$. However, using an approximation argument, we aim to show that $\P(\sup_{t \in [0,T]} S_t \geq B)$ approaches zero faster than any polynomial even though we don't have an explicit estimation of such probability.

It is difficult to construct an approximation argument that works model-independently. Indeed, it is also hard to construct general approximation arguments even when studying European call options. We aim, therefore, to extend the result for one of the most popular rough volatility models. In this section we will apply the main result of this paper to prove that the faster-than-polynomial speed of convergence holds for the Rough Bergomi model. In this specific scenario we are assuming that the log-price is of the form
\[
X_t = x - \frac{1}{2}\int_0^t \sigma_s^2 ds + \int_0^t \sigma_s dZ_s
\]
where $Z_t = \rho W_t + \sqrt{1-\rho^2} B_t$ and $\sigma$ has the form
\[
\sigma_t^2 = \sigma_0^2 \exp\left( \nu W^H_t - \frac{\nu^2 t^{2H}}{2}\right), \quad W^H_t = \sqrt{2H}\int_0^t (t-s)^{H-1/2} dW_s.
\]
Notice that in this case we can't apply the main result of this paper directly to $\sigma$ because it is not true that there exist two constants $\alpha, \beta > 0$ such that $\alpha < \sigma_t < \beta$ almost surely. However, we can use a truncation argument to show that the speed of convergence of the probability of reaching the barrier is still faster than any polynomial. In order to do so, we consider the truncated log-price
\[
X_t^n = x - \frac{1}{2}\int_0^t (\sigma_s^n)^2 ds + \int_0^t \sigma_s^n dZ_s
\]
where 
\[
\sigma^n_t = \phi_n \left(\nu W^H_t - \frac{\nu^2 t^{2H}}{2} \right)
\]
and $\phi_n(x)$ is defined as follows: consider $\phi(x) = \sigma_0\exp(x)$. For every $n > 1$ we consider $\phi_n \in \mathcal{C}^2_b$ satisfying $\phi_n(x) = \phi(x)$ for all $x \in [-n,n]$, $\phi_n(x) \in [\phi(-2n) \vee \phi(x), \phi(-n)]$ for all $x \leq -n$ and $\phi_n(x) \in [\phi(n), \phi(x) \vee \phi(2n)]$ if $x \geq n$. We also define $S^n_t = \exp(X_t^n)$. Notice that for every fixed $\eps > 0$ 
\[
\P(\sup_{t \in [0,T]} S_t \geq B) \leq \P(\sup_{t \in [0,T]} S_t^n \geq B- \eps) + \P(\sup_{t \in [0,T]} |S_t^n - S_t| \geq \eps) =: A_1 + A_2.
\]
We aim to show that for every fixed $m > 0$, $\lim_{T \to 0} T^{-m}\P(\sup_{t \in [0,T]} S_t \geq B)= 0$. Notice that, due to the main result of this paper, we have that
\[
\lim_{T \to 0} T^{-m} A_1 = \lim_{T \to 0} T^{-m}\P(\sup_{t \in [0,T]} S^n_t \geq B)= 0.
\]
Regarding $A_2$, we use Chebyshev's inequality and Doob's martingale inequality to show that
\begin{align*}
    A_2 \leq &\frac{ \E\left[ |\sup_{t \in [0,T]} S_t - \sup_{t \in [0,T]} S_t^n |^p\right]}{\eps^p} \\ 
    \leq & \frac{ \E\left[ \sup_{t \in [0,T]} |S_t - S_t^n|^p\right]}{\eps^p} \\
    \leq & \frac{1}{\eps^p} \left( \frac{p}{p-1} \right)^p \E[|S_T - S_T^n|^p].
\end{align*}
Finally, using the same argument as in \cite{alos2019estimating}, we can find $p \geq 2$ large enough such that $\lim_{T \to 0} T^{-m} A_2 = 0$. Therefore, for every fixed $m > 0$ we find that
\[
\lim_{T \to 0} \frac{\P(\sup_{t \in [0,T]} S_t \geq B)}{T^m} = 0
\]
so $C_0^b$ approaches zero faster than any polynomial even though the bound for $\P(\sup_{t \in [0,T]} S_t \geq B)$ does not necessarily hold if the hypothesis $\alpha < \sigma_t < \beta$ fails.

In order to numerically visualize the difference between the asymptotic behaviours of both European call options and up-and-in barrier call options under the Rough Bergomi model, we will perform three different simulations: one in the In-The-Money case (ITM), one in the At-The-Money case (ATM) and the last one in the Out-The-Money case (OTM).

\begin{ex}
    Consider $S$ following Model \eqref{model} with $S_0 = 10$ and $K = 9.5$, where the parameters of the model are $\rho = -0.3$, $\nu = 0.5$ and $H = 0.2$. In this case, asymptotic behaviour     of both options are displayed in Figure \ref{fig: ITM}.
    \begin{figure}[H]
        \centering
        \includegraphics[width=0.5\linewidth]{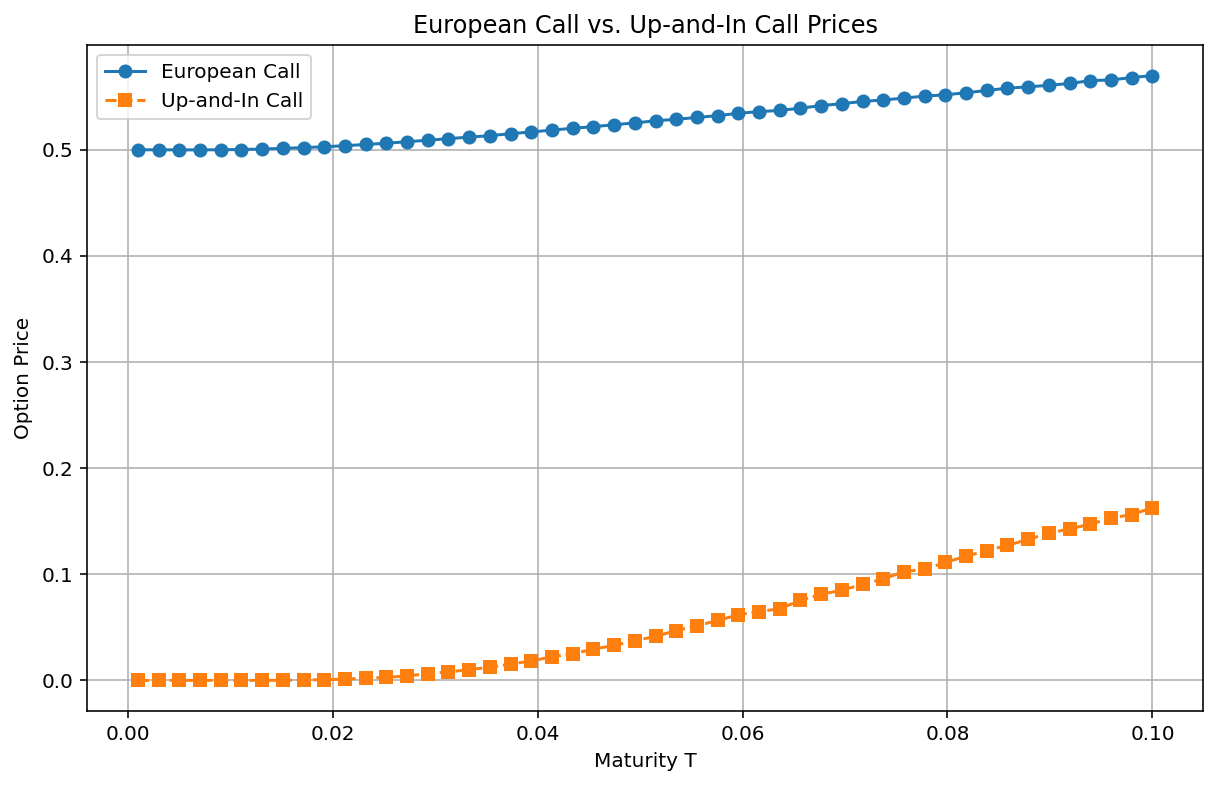}
        \caption{ITM European and up-and-in call options}
        \label{fig: ITM}
    \end{figure}
    Notice that the price of the up-and-in barrier option tends to zero, while the price of the European call option tends to $S_0 - K = 0.5$. Hence, the numerics are consistent with \cite{jafari2025option} and Theorem \ref{th: main theorem}.
\end{ex}

\begin{ex} \label{ex: ATM}
    Consider $S$ following Model \eqref{model} with $S_0 = K = 10$ where the parameters of the model are $\rho = -0.3$, $\nu = 0.5$ and $H = 0.2$. In this case, the asymptotic behaviour of both options are displayed in Figure \ref{fig: ATM}.
     \begin{figure}[H]
        \centering
        \includegraphics[width=0.5\linewidth]{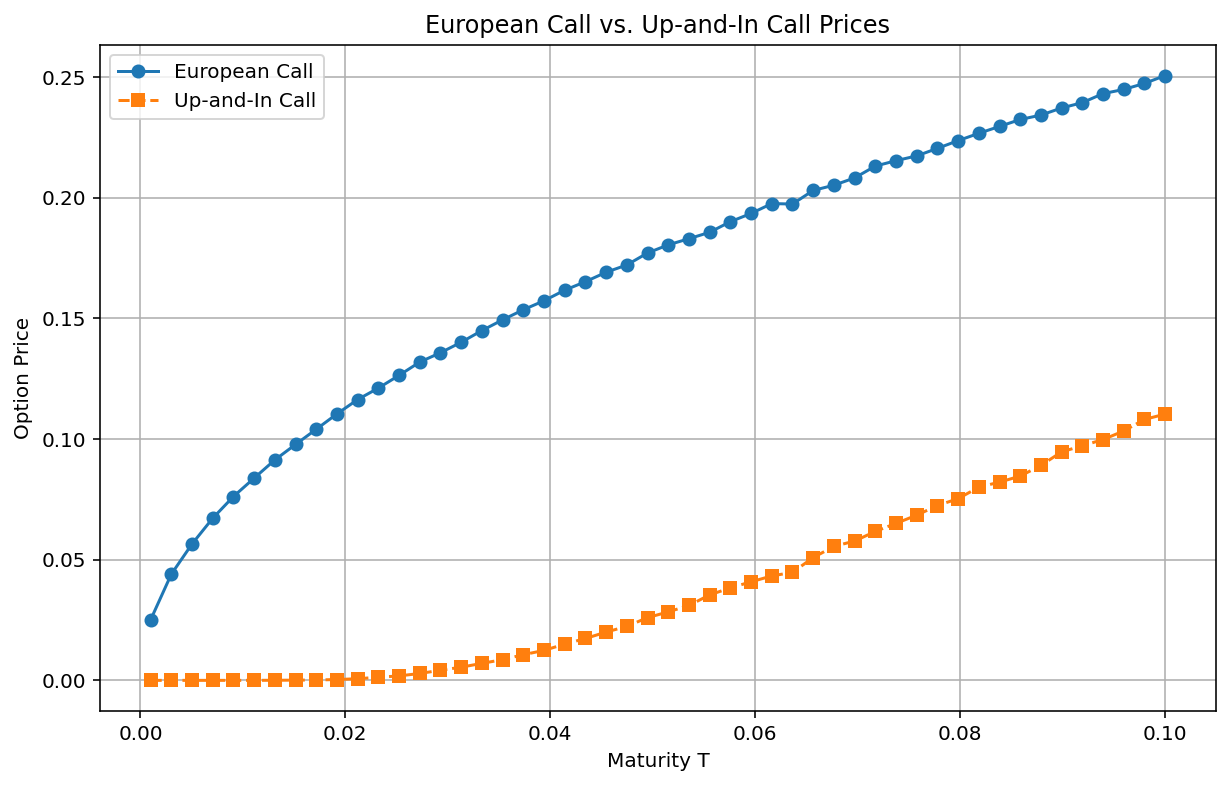}
        \caption{ATM European and up-and-in call options}
        \label{fig: ATM}
    \end{figure}
\end{ex}

\begin{ex} \label{ex: OTM}
    Consider $S$ following Model \eqref{model} with $S_0 = 10$, $K = 11$ where the parameters of the model are $\rho = -0.3$, $\nu = 0.5$ and $H = 0.2$. In this case, the asymptotic behaviour of both options are displayed in Figure \ref{fig: OTM}.
     \begin{figure}[H]
        \centering
        \includegraphics[width=0.5\linewidth]{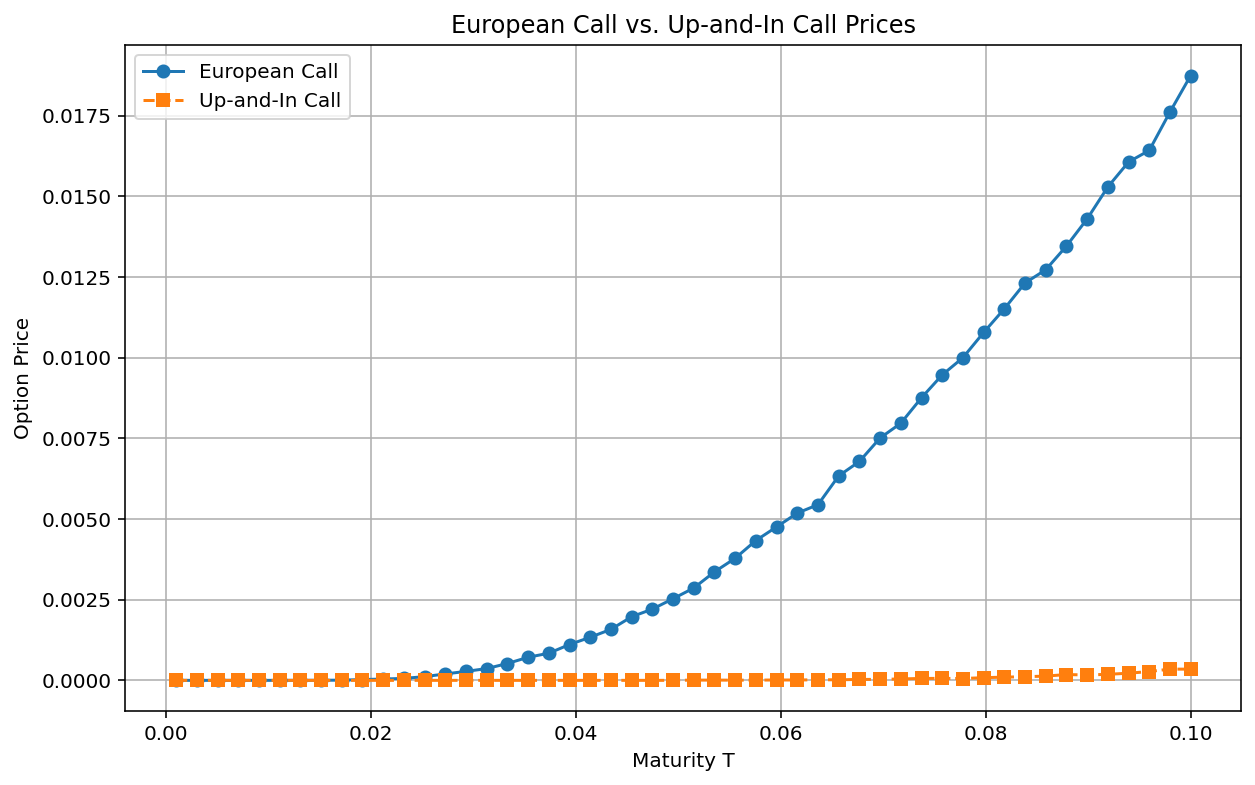}
        \caption{ATM European and up-and-in call options}
        \label{fig: OTM}
    \end{figure}
\end{ex}

Observe that in both Examples \ref{ex: ATM} and \ref{ex: OTM} both options converge to zero as $T \to 0$ as it is stated in \cite{jafari2025option} and Theorem \ref{th: main theorem}. Moreover, it is clearly visible that the speed of convergence of the up-and-in barrier call option is faster than the European one. 

\section{Conclusions}

In this paper, we employed Malliavin calculus techniques to analyse the asymptotic behaviour of up-and-in barrier call options under a general stochastic volatility model. Drawing inspiration from the methodologies developed in \cite{nualart1988continuite} and \cite{florit1995local} for Gaussian processes, as well as \cite{dalang2020density} and \cite{pu2018stochastic} for the stochastic heat equation with additive noise, we studied the distribution of the supremum of an asset price governed by a general stochastic volatility process. These probabilistic insights were then applied to characterize the behaviour of up-and-in barrier call option prices, concluding in this way that the rate of convergence to zero of up-and-in barrier call options is faster than any polynomial of the time to maturity $T$.  In particular, under the Rough Bergomi model, we quantified the rate of decay and provided numerical evidence highlighting the faster convergence of barrier option prices compared to their European counterparts.

\section*{Acknowledgements}
I would like to express my sincere gratitude to Elisa Alòs and Josep Vives for their valuable discussions, insightful suggestions and careful reading of the manuscript.

\section*{Funding}
Òscar Burés supported by program AGAUR-FI ajuts (2025 FI-1 00580) from the Department of Research and Universities of the Government of Catalonia and the co-funding of the European Social Fund Plus (ESF+).

\section*{Conflict of interest}
The author declares there is no conflict of interest.
\bibliographystyle{apalike}
\bibliography{refs.bib}

@article {dalang2020density,
    AUTHOR = {Dalang, Robert C. and Pu, Fei},
     TITLE = {On the density of the supremum of the solution to the linear
              stochastic heat equation},
  JOURNAL = {Stochastics and Partial Differential Equations. Analysis and
              Computations},
    VOLUME = {8},
      YEAR = {2020},
    NUMBER = {3},
     PAGES = {461--508},
      ISSN = {2194-0401},
   MRCLASS = {60H15 (35R60 60G60 60H07 60J45)},
  MRNUMBER = {4147695},
MRREVIEWER = {Xiaobin Sun},
       DOI = {10.1007/s40072-019-00151-9},
       URL = {https://doi.org/10.1007/s40072-019-00151-9},
}

@article {florit1995local,
    AUTHOR = {Florit, Carme and Nualart, David},
     TITLE = {A local criterion for smoothness of densities and application
              to the supremum of the {B}rownian sheet},
  JOURNAL = {Statistics \& Probability Letters},
    VOLUME = {22},
      YEAR = {1995},
    NUMBER = {1},
     PAGES = {25--31},
      ISSN = {0167-7152},
   MRCLASS = {60H07 (60J65)},
  MRNUMBER = {1327725},
MRREVIEWER = {Jean Picard},
       DOI = {10.1016/0167-7152(94)00043-8},
       URL = {https://doi.org/10.1016/0167-7152(94)00043-8},
}

@book {nualart2006malliavin,
    AUTHOR = {Nualart, David},
     TITLE = {The {M}alliavin calculus and related topics},
    SERIES = {Probability and its Applications (New York)},
   EDITION = {Second},
 PUBLISHER = {Springer-Verlag, Berlin},
      YEAR = {2006},
     PAGES = {xiv+382},
      ISBN = {978-3-540-28328-7; 3-540-28328-5},
   MRCLASS = {60-02 (60H07 60H30)},
  MRNUMBER = {2200233},
MRREVIEWER = {Daniel Ocone},
}

@article {nourdin2009density,
    AUTHOR = {Nourdin, Ivan and Viens, Frederi G.},
     TITLE = {Density formula and concentration inequalities with
              {M}alliavin calculus},
  JOURNAL = {Electronic Journal of Probability},
    VOLUME = {14},
      YEAR = {2009},
     PAGES = {no. 78, 2287--2309},
   MRCLASS = {60G15 (60H07)},
  MRNUMBER = {2556018},
MRREVIEWER = {Luigi Manca},
       DOI = {10.1214/EJP.v14-707},
       URL = {https://doi.org/10.1214/EJP.v14-707},
}

@article{ustunel2010analysis,
  title={{Analysis on Wiener space and applications}},
  author={{\"U}st{\"u}nel, Ali S},
  journal={arXiv preprint arXiv:1003.1649},
  year={2010}
}

@article {kim1990cube,
    AUTHOR = {Kim, JeanKyung and Pollard, David},
     TITLE = {Cube root asymptotics},
    JOURNAL = {The Annals of Statistics},
    VOLUME = {18},
      YEAR = {1990},
    NUMBER = {1},
     PAGES = {191--219},
      ISSN = {0090-5364},
   MRCLASS = {62F12 (60F17 60G15 62G30)},
  MRNUMBER = {1041391},
MRREVIEWER = {R. M. Dudley},
       DOI = {10.1214/aos/1176347498},
       URL = {https://doi.org/10.1214/aos/1176347498},
}

@book {dalang2009minicourse,
    AUTHOR = {Dalang, Robert and Khoshnevisan, Davar and Mueller, Carl and
              Nualart, David and Xiao, Yimin},
     TITLE = {A minicourse on stochastic partial differential equations},
    SERIES = {Lecture Notes in Mathematics},
    VOLUME = {1962},
      NOTE = {Held at the University of Utah, Salt Lake City, UT, May 8--19,
              2006,
              Edited by Khoshnevisan and Firas Rassoul-Agha},
 PUBLISHER = {Springer-Verlag, Berlin},
      YEAR = {2009},
     PAGES = {xii+216},
      ISBN = {978-3-540-85993-2},
   MRCLASS = {60-06 (35R60 60H15)},
  MRNUMBER = {1500166},
}

@article {alos2019estimating,
    AUTHOR = {Al\`os, Elisa and Shiraya, Kenichiro},
     TITLE = {Estimating the {H}urst parameter from short term volatility
              swaps: a {M}alliavin calculus approach},
  JOURNAL = {Finance and Stochastics},
    VOLUME = {23},
      YEAR = {2019},
    NUMBER = {2},
     PAGES = {423--447},
      ISSN = {0949-2984},
   MRCLASS = {91G60 (60H07 91G80)},
  MRNUMBER = {3933427},
       DOI = {10.1007/s00780-019-00384-5},
       URL = {https://doi.org/10.1007/s00780-019-00384-5},
}

@article {CarradaHerrera2013,
    AUTHOR = {Carrada-Herrera, R. and Grudsky, S. and Palomino-Jim\'{e}nez, C.
              and Porter, R. M.},
     TITLE = {Asymptotics of {E}uropean double-barrier option with compound
              {P}oisson component},
  JOURNAL = {Communications in Mathematical Analysis},
    VOLUME = {14},
      YEAR = {2013},
    NUMBER = {2},
     PAGES = {40--66},
   MRCLASS = {91G20},
  MRNUMBER = {3011519},
}

@article {Hu16062010,
    AUTHOR = {Hu, Fannu and Knessl, Charles},
     TITLE = {Asymptotics of barrier option pricing under the {CEV} process},
  JOURNAL = {Applied Mathematical Finance},
    VOLUME = {17},
      YEAR = {2010},
    NUMBER = {3},
     PAGES = {261--300},
      ISSN = {1350-486X},
   MRCLASS = {91G20},
  MRNUMBER = {2786962},
       DOI = {10.1080/13504860903335355},
       URL = {https://doi.org/10.1080/13504860903335355},
}

@article{kato2013asymptotic,
  title={{An asymptotic expansion formula for up-and-out barrier option price under stochastic volatility model}},
  author={Kato, Takashi and Takahashi, Akihiko and Yamada, Toshihiro},
  journal={JSIAM Letters},
  volume={5},
  pages={17--20},
  year={2013},
  publisher={The Japan Society for Industrial and Applied Mathematics}
}

@article {jafari2025option,
    AUTHOR = {Jafari, Hossein and Bur\'{e}s, \`Oscar and Vives, Josep and Zhao,
              Yiqiang Q.},
     TITLE = {Option price asymptotics under a stochastic volatility {L}\'{e}vy
              model with infinite activity jumps},
  JOURNAL = {International Journal of Theoretical and Applied Finance},
    VOLUME = {28},
      YEAR = {2025},
    NUMBER = {1-2},
     PAGES = {Paper No. 2550006, 29},
      ISSN = {0219-0249},
   MRCLASS = {91G20 (60G51 60H07)},
  MRNUMBER = {4934823},
       DOI = {10.1142/S0219024925500062},
       URL = {https://doi.org/10.1142/S0219024925500062},
}

@phdthesis{pu2018stochastic,
  title={{The stochastic heat equation: hitting probabilities and the probability density function of the supremum via Malliavin calculus}},
  author={Pu, Fei},
  year={2018},
  school={EPFL}
}

@article {nualart1988continuite,
    AUTHOR = {Nualart, David and Vives, Josep},
     TITLE = {Continuit\'{e} absolue de la loi du maximum d'un processus
              continu},
  JOURNAL = {Comptes Rendus des S\'{e}ances de l'Acad\'{e}mie des Sciences. S\'{e}rie
              I. Math\'{e}matique},
    VOLUME = {307},
      YEAR = {1988},
    NUMBER = {7},
     PAGES = {349--354},
      ISSN = {0249-6291},
   MRCLASS = {60G15 (60H07)},
  MRNUMBER = {958796},
MRREVIEWER = {Ali S\"{u}leyman \"{U}st\"{u}nel},
}

@article{hullwhite,
  title={{The pricing of options on assets with stochastic volatilities}},
  author={Hull, J. and White, A.},
  journal={Journal of Finance},
  volume={42},
  pages={281--300}, 
  year={1987}
}

@article {Heston93,
    AUTHOR = {Heston, Steven L.},
     TITLE = {A closed-form solution for options with stochastic volatility
              with applications to bond and currency options},
  JOURNAL = {The Review of Financial Studies},
    VOLUME = {6},
      YEAR = {1993},
    NUMBER = {2},
     PAGES = {327--343},
      ISSN = {0893-9454},
   MRCLASS = {91G20 (91B70)},
  MRNUMBER = {3929676},
       DOI = {10.1093/rfs/6.2.327},
       URL = {https://doi.org/10.1093/rfs/6.2.327},
}

@article{wiggins87,
  title={{Option values under stochastic volatilities}},
  author={Wiggins, J.},
  journal={Journal of Financial Economics},
  volume={19},
  pages={351--372}, 
  year={1987}
}

@article{steinstein,
  title={{Stock price distributions with stochastic volatility: An analytic approach}},
  author={Stein, E. M. and Stein, J. C.},
  journal={The Review of Financial Studies},
  volume={4},
  pages={727--752}, 
  year={1991}
}

@article {bayerfrizgatheral16,
    AUTHOR = {Bayer, Christian and Friz, Peter and Gatheral, Jim},
     TITLE = {Pricing under rough volatility},
  JOURNAL = {Quantitative Finance},
    VOLUME = {16},
      YEAR = {2016},
    NUMBER = {6},
     PAGES = {887--904},
      ISSN = {1469-7688},
   MRCLASS = {91G20 (60G22 60H30 62P05 91G80)},
  MRNUMBER = {3494612},
       DOI = {10.1080/14697688.2015.1099717},
       URL = {https://doi.org/10.1080/14697688.2015.1099717},
}

@article {ALV,
    AUTHOR = {Al\`os, Elisa and Le\'{o}n, Jorge A. and Vives, Josep},
     TITLE = {On the short-time behavior of the implied volatility for
              jump-diffusion models with stochastic volatility},
  JOURNAL = {Finance and Stochastics},
    VOLUME = {11},
      YEAR = {2007},
    NUMBER = {4},
     PAGES = {571--589},
      ISSN = {0949-2984},
   MRCLASS = {91B28 (60H07 60H30 91B70)},
  MRNUMBER = {2335834},
MRREVIEWER = {Pedro Gurrola},
       DOI = {10.1007/s00780-007-0049-1},
       URL = {https://doi.org/10.1007/s00780-007-0049-1},
}

@article{F,
  title={{Short-time at-the-money skew and rough fractional volatility}},
  author={Fukasawa, M.},
  journal={Quantitative Finance},
  volume={17},
  number={02},
  pages={189-198},
  year={2017},
}

@article{CR,
  title={{Long memory in continuous-time stochastic volatility models}},
  author={Comte,F. and Renault,E.},
  journal={Mathematical Finance},
  volume={8},
  number={04},
  pages={291--323},
  year={1998},
}

@article{gatheralJR18,
  title={{Volatility is rough}},
  author={Gatheral, Jim and Jaisson, Thibault and Rosenbaum, Mathieu},
  journal={Quantitative Finance},
  volume={18},
  number={6},
  pages={933--949},
  year={2018},
  publisher={Taylor \& Francis}
}

@article{dalang2007hitting,
  author    = {Dalang, Robert C. and Khoshnevisan, Davar and Nualart, Eulalia},
  title     = {Hitting probabilities for systems of non-linear stochastic heat equations with multiplicative noise},
  journal   = {Probability Theory and Related Fields},
  year      = {2009},
  volume    = {144},
  number    = {3},
  pages     = {371--427},
  doi       = {10.1007/s00440-008-0150-1},
  url       = {https://doi.org/10.1007/s00440-008-0150-1},
  issn      = {1432-2064}
}

@article{zvan2000pde,
  title={{PDE methods for pricing barrier options}},
  author={Zvan, Robert and Vetzal, Kenneth R and Forsyth, Peter A},
  journal={Journal of Economic Dynamics and Control},
  volume={24},
  number={11-12},
  pages={1563--1590},
  year={2000},
  publisher={Elsevier}
}

@article {brown2001robust,
    AUTHOR = {Brown, Haydyn and Hobson, David and Rogers, L. C. G.},
     TITLE = {Robust hedging of barrier options},
  JOURNAL = {Mathematical Finance. An International Journal of Mathematics,
              Statistics and Financial Economics},
    VOLUME = {11},
      YEAR = {2001},
    NUMBER = {3},
     PAGES = {285--314},
      ISSN = {0960-1627},
   MRCLASS = {91B28},
  MRNUMBER = {1839367},
       DOI = {10.1111/1467-9965.00116},
       URL = {https://doi.org/10.1111/1467-9965.00116},
}

@article{kou2003pricing,
  title={{On pricing of discrete barrier options}},
  author={Kou, Steven G},
  journal={Statistica Sinica},
  pages={955--964},
  year={2003},
  publisher={JSTOR}
}
\end{document}